%2009 Meny

\documentclass[12pt]{amsart}
\usepackage{amscd,amssymb,graphics}

\usepackage{amsfonts}
\usepackage{amsmath}
\usepackage{amsxtra}
\usepackage{latexsym}
\usepackage[mathcal]{eucal}

\usepackage{graphics,colortbl}

\input xy
\xyoption{all}
\usepackage{epsfig}

\usepackage[pdftex,bookmarks,colorlinks,breaklinks]{hyperref}
\makeatletter
\@namedef{subjclassname@2020}{%
  \textup{2020} Mathematics Subject Classification}
\makeatother
\oddsidemargin 0.1875 in \evensidemargin 0.1875in
\textwidth 6 in % Width of text line.
\textheight 230mm \voffset=-4mm

\newtheorem{theorem}{Theorem}[section]
\newtheorem{fact}[theorem]{Fact}
\newtheorem{corollary}[theorem]{Corollary}
\newtheorem{lemma}[theorem]{Lemma}
\newtheorem{proposition}[theorem]{Proposition}

\newtheorem{question}[theorem]{Question}

\numberwithin{equation}{section}

\newtheorem{claim}{Claim}

\theoremstyle{definition}

\newtheorem{definition}[theorem]{Definition}

\newcommand{\ben}{\begin{enumerate}}
	\newcommand{\een}{\end{enumerate}}
\newcommand{\bit}{\begin{itemize}}
	\newcommand{\eit}{\end{itemize}}

\def\di{{\mathrm{di}}}

\def\QED{\nobreak\quad\ifmmode\roman{Q.E.D.}\else{\rm Q.E.D.}\fi}

\def \di {{\mathrm{d}}}

\def\GL{\operatorname{GL}}

\DeclareMathOperator{\res}{\restriction}

	%old\title[]{Minimality properties of some topological matrix groups}
	\title[Successors of locally compact topologies]{Successors of topologies of connected locally compact groups}

\author[D. Peng, Z. Xiao]{Dekui Peng, Zhiqiang Xiao*}
   \address[D. Peng]
	{\hfill\break Institute of Mathematics,
		\hfill\break Nanjing Normal University, 210023,
		\hfill\break China}
\email{pengdk10@lzu.edu.cn}

	\address[Z. Xiao]
	{\hfill\break Department of Mathematics,
		\hfill\break Taizhou University, 225300,
		\hfill\break China}
	%\footnote{ms:please add your name and details}
	\email{zqxiao@bicmr.pku.edu.cn}
\thanks{*Corresponding Author}

	\subjclass[2020]{22A05, 54A10, 22D05, 22C05}
	
	\keywords{Topological Group; Successor; Connected Locally Compact Group; Lattice of Group Topologies}
\begin{document}
	\maketitle	
\thanks{\begin{center}Dedicated to 60th anniversary of Professor Wei He\end{center}}
	\begin{abstract} Let $G$ be a group and $\sigma, \tau$ be  topological group topologies on $G$. We say that $\sigma$ is a successor of $\tau$ if $\sigma$ is strictly finer than $\tau$ and there is not a group topology properly between them.
	In this note, we explore the existence of successor topologies in topological groups, particularly focusing on non-abelian connected locally compact groups. Our main contributions are twofold: for a connected locally compact group $(G, \tau)$, we show that (1) if $(G, \tau)$ is compact, then $\tau$ has a precompact successor if and only if there exists a discontinuous homomorphism from $G$ into a simple connected compact group with dense image, and (2) if
$G$ is solvable, then
$\tau$ has no successors.
Our work relies on the previous characterization of locally compact group topologies on abelian groups processing successors.
\end{abstract}
%\tableofcontents

\section{Introduction}

\subsection{The lattice of group topologies}

%All topological groups considered in this note are assumed to be Hausdorff unless explicitly stated otherwise.
Let $G$ be a group. The set $\mathcal{L}(G)$ of all (not necessarily Hausdorff) group topologies on $G$ forms a complete lattice, with the natural inclusion order.
This lattice has been studied by many authors. For example, if $G$ is abelian, then $\mathcal{L}(G)$ has been shown in \cite{Kil} to contain $2^{2^{|G|}}$ many distinct Hausdorff group topologies, the maximum possible number.
This result was generalized by Dikranjan and Protasov \cite{DP}, who showed that $\mathcal{L}(G)$ contains $2^{2^{|G|}}$ many {\bf maximal} Hausdorff group topologies.
Recall that a group topology is called {\em maximal} if it is succeeded by the discrete topology, see Definition \ref{gap} below.
%In \cite{Sm}, it is proved that
Let $\mathcal{L}_2(G)$ be the subset of $\mathcal{L}(G)$ consisting of Hausdorff group topologies. Minimal elements in this set are called {\em minimal group topologies}. A group with such a topology is called a {\em minimal topological group.} Minimal topological groups are one of the most widely studied topics in the field of topological groups, we refer the readers to the nice book \cite{DPS89} for more about minimal topological groups.

Let us introduce another object in $\mathcal{L}(G)$ recently studied by the authors and collaborators, which has been shown in \cite{HP0, HP} to have deep connections with minimal topological groups.
\begin{definition}\label{gap}
Let $G$ be a group and $\tau, \sigma\in \mathcal{L}(G)$, if $\tau\subsetneq \sigma$ and there is no $\lambda\in \mathcal{L}(G)$ satisfying that $\tau\subsetneq \lambda\subsetneq \sigma$, then we call the pair $\{\tau, \sigma\}$ a {\em gap} \cite{HPTX1, HPTX2, PHTX} or a {\em cover} \cite{Ar2} (in $\mathcal{L}(G)$).
 We also say that $\sigma$ is a {\em successor} of $\tau$ or $\sigma$ {\em succeeds} $\tau$, in symbol $\tau\prec \sigma$.
We write $\tau\preceq \sigma$ to mean ``$\tau=\sigma$ or $\tau\prec \sigma$.''
\end{definition}
We say that $\tau$ is a predecessor of $\sigma$ if $\tau\prec \sigma$.

The study of gaps is closed related to the (semi-)modularity of the lattice $\mathcal{L}(G)$, as semi-modular lattices satisfy the so-called ``(upper) cover condition'', that is $a\prec b$ implies $a\vee c\preceq b\vee c$ for any $c$, see \cite{Gr}.
A very useful result is that the lattice $\mathcal{L}(G)$ is modular whenever $G$ is abelian \cite{Sm}. By the help of the nice property,
the authors and collaborators in \cite{PHTX} provided the following characterization of compact abelian groups\footnote{locally compact spaces are always assumed to be Hausdorff} whose topologies have successors:
\begin{fact}\label{AbelCom}
Let $(G, \tau)$ be a compact abelian group. Then the following conditions are equivalent:
\begin{itemize}
   \item[(1)] $\tau$ has a successor;
   \item[(2)] $G/pG$ is infinite for some prime number $p$;
   \item[(3)] $(G, \tau)$ has a dense subgroup of prime index.
\end{itemize}
\end{fact}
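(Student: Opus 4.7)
The plan is to decouple the three conditions using Pontryagin duality for the algebraic content and the modularity of $\mathcal L(G)$ for the lattice-theoretic content, treating the three implications separately.

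First, I would prove $(2)\Leftrightarrow(3)$. Since $G$ is compact abelian, the endomorphism $x\mapsto px$ has compact, hence closed, image $pG$, so $G/pG$ is a compact Hausdorff $\F_p$-module, necessarily of the form $\F_p^I$ for some index set $I$. Every subgroup of $G$ of index $p$ contains $pG$, so such subgroups correspond bijectively to hyperplanes of $\F_p^I$, i.e.\ (up to $\F_p^\times$) to nonzero homomorphisms $\F_p^I\to\F_p$. The open such subgroups correspond to continuous homomorphisms, which form the restricted direct sum $\F_p^{(I)}$; when $I$ is infinite the algebraic dual is strictly larger. Combined with the observation that any subgroup of prime index is either closed (hence open) or dense, this shows $G/pG$ is infinite if and only if $G$ admits a dense subgroup of index $p$.

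Next, for $(3)\Rightarrow(1)$, let $H\leq G$ be dense with $[G:H]=p$ prime, and let $\tau_H$ be the pullback of the discrete topology on $G/H\cong\F_p$. Set $\sigma:=\tau\vee\tau_H$. Then $\tau\subsetneq\sigma$ because $H$ is $\sigma$-open but not $\tau$-open. For any $\lambda\in[\tau,\sigma]$, modularity of $\mathcal L(G)$ \cite{Sm} gives $\lambda=\tau\vee(\lambda\wedge\tau_H)$. The principal ideal $[\mathrm{indiscrete},\tau_H]\subset\mathcal L(G)$ is lattice-isomorphic to $\mathcal L(\F_p)$, since every group topology below $\tau_H$ factors through $G/H$, and $\F_p$ admits only the indiscrete and discrete group topologies. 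Hence $\lambda\wedge\tau_H$ is either the indiscrete topology (so $\lambda=\tau$, contradicting $\tau\subsetneq\lambda$) or $\tau_H$ itself (so $\lambda=\sigma$).

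The implication $(1)\Rightarrow(3)$ is where I expect most of the work to lie. After passing to the Hausdorff quotient by $\overline{\{e\}}^\sigma$ if necessary, I would let $\sigma^+$ denote the finest precompact group topology on $G$ contained in $\sigma$; since $\tau$ is compact, $\tau\leq\sigma^+\leq\sigma$, and the successor condition forces $\sigma^+=\tau$ or $\sigma^+=\sigma$. In the precompact case $\sigma^+=\sigma$, the Comfort--Ross correspondence identifies $[\tau,\sigma]$ with the interval of point-separating subgroups of $\mathrm{Hom}(G,\T)$ between $\widehat G$ and some $S_\sigma$; the cover condition then forces $S_\sigma/\widehat G$ to be a simple abelian group, hence $\Z/p\Z$ for some prime $p$. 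A lift of a generator is a character $\chi\in S_\sigma$ with $p\chi\in\widehat G$ but $\chi\notin\widehat G$, and after adjusting $\chi$ within its coset modulo $\widehat G$, exploiting the divisibility of $\T$ together with the structure of the discrete group $\widehat G$, one extracts a discontinuous character $G\to\F_p\subset\T$ of order exactly $p$, whose kernel is a dense subgroup of index $p$. The main obstacle I foresee is the non-precompact case $\sigma^+=\tau$: here Pontryagin methods do not apply directly and one must argue with the $\sigma$-neighborhood filter of $e$ to show either that this case cannot occur or that a dense prime-index subgroup still arises by a direct combinatorial argument; this, together with the coset adjustment above, is the technical heart of the argument and the content that the authors attribute to \cite{PHTX}.
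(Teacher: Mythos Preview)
This statement is not proved in the paper at all: it is recorded as a \emph{Fact} quoted from \cite{PHTX}, so there is no in-paper argument to compare your proposal against. The only information the present paper adds is the remark, just after Theorem~\ref{main1}, that ``a key lemma for the characterization Fact~\ref{AbelCom} says that a successor of a precompact group topology on an abelian group must be precompact.'' This matches your diagnosis that ruling out the case $\sigma^{+}=\tau$ (equivalently, showing any successor of $\tau$ is already precompact) is the crux of $(1)\Rightarrow(3)$.

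On the merits of your sketch: the arguments for $(2)\Leftrightarrow(3)$ and for $(3)\Rightarrow(1)$ are correct and self-contained; the modularity trick for $(3)\Rightarrow(1)$ is exactly the kind of argument one expects here. For $(1)\Rightarrow(3)$, once precompactness of $\sigma$ is granted, Comfort--Ross indeed yields $S_\sigma/\widehat G\cong\Z/p\Z$ and hence a discontinuous $\chi$ with $p\chi\in\widehat G$. Be aware, though, that your ``coset adjustment'' is not a formality: replacing $\chi$ by a discontinuous character of exact order $p$ amounts to showing $p\chi\in p\widehat G$, and the snake-lemma sequence
\[
0\;\longrightarrow\;\widehat G[p]\;\longrightarrow\;\Hom(G,\Z/p\Z)\;\longrightarrow\;\bigl(\Hom(G,\T)/\widehat G\bigr)[p]\;\longrightarrow\;\widehat G/p\widehat G\;\longrightarrow\;0
\]
shows this can fail for a generic $p$-torsion class in $\Hom(G,\T)/\widehat G$ when $\widehat G/p\widehat G\neq 0$. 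So either one must exploit the extra hypothesis that $\sigma$ is a successor in the \emph{full} lattice (not merely in the precompact one) to force $p\chi\in p\widehat G$, or one should aim directly for condition~(2) rather than~(3). You correctly flag this, together with the precompactness lemma, as the substance deferred to \cite{PHTX}.
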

Since connected compact abelian groups are divisible, $G/pG$ is never infinite when $G$ is connected.
Consequently, connected compact group topologies on abelian groups do not have successors.
From this, we derive the following corollary:
\begin{fact}\label{AbelLC}\cite{PHTX}
Let $(G, \tau)$ be a connected locally compact abelian group. Then $\tau$ has no successors.
\end{fact}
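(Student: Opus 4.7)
The plan is to reduce the assertion to the compact case (which, as the discussion preceding the corollary observes, has been settled via Fact \ref{AbelCom} together with divisibility) by invoking the Pontryagin--van Kampen structure theorem. First I would write $(G,\tau) \cong \R^n \times K$ for some $n \geq 0$ and some connected compact abelian group $K$. Both factors are divisible --- $\R^n$ trivially, and $K$ because Pontryagin duality sends connected compact abelian groups to torsion-free discrete abelian groups --- so $G$ is a divisible abelian group, and in particular $G/pG = 0$ for every prime $p$.

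Next, suppose for contradiction that $\tau \prec \sigma$ in $\mathcal{L}(G)$. The compact subgroup $K$ is $\tau$-closed and therefore $\sigma$-closed, and the restriction $\sigma|_K$ is an abelian group topology on $K$ refining the compact topology $\tau|_K$. By the compact-connected case already settled, $\tau|_K$ has no successor; so whenever $\sigma|_K \neq \tau|_K$ there is an intermediate group topology $\mu$ with $\tau|_K \subsetneq \mu \subsetneq \sigma|_K$. I would then lift $\mu$ to a group topology $\lambda$ on $G$ --- using either the product topology $\tau|_{\R^n} \times \mu$ on $\R^n \times K$, or the modular identity $\tau \vee (\mu_G \wedge \sigma) = (\tau \vee \mu_G) \wedge \sigma$ for a suitable $\mu_G$ restricting to $\mu$ on $K$ --- and verify that $\tau \subsetneq \lambda \subsetneq \sigma$, contradicting $\tau \prec \sigma$. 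The residual case $\sigma|_K = \tau|_K$, in which the refinement lives only in the Euclidean factor, is handled by the analogous argument for $\R^n$ (also divisible).

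The hard step, as far as I can see, is the \emph{lifting}: making sure that the intermediate topology on $K$ (or on $\R^n$) really produces a group topology on $G$ that is strictly between $\tau$ and $\sigma$, rather than collapsing to one of the endpoints. This is precisely what the modularity of $\mathcal{L}(G)$, available here because $G$ is abelian \cite{Sm}, is engineered to do: choosing $\mu_G$ with $\mu_G|_K = \mu$ and reading off $\lambda$ from the modular identity forces strict intermediacy for free. A cleaner route --- and presumably the one the authors have in mind when they refer in the abstract to ``the previous characterization of locally compact group topologies on abelian groups processing successors'' from \cite{PHTX} --- would be to skip the structure-theoretic reduction and invoke that general locally compact abelian characterization directly, whereupon divisibility of $G$ rules out a prime-index quotient in one line.
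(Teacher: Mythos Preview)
Your ``cleaner route'' is exactly what the paper does: Fact~\ref{AbelLC} is simply quoted from \cite{PHTX}, and the surrounding text only derives the \emph{compact} connected subcase (via Fact~\ref{AbelCom} and divisibility) before announcing the locally compact statement as a corollary with a citation. So on that score you and the paper agree.

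Your structure-theoretic route, however, is not self-contained, and the gap is not where you think it is. The lifting step you flag as ``hard'' is in fact unnecessary: since $G$ is abelian, $K$ is central, so Fact~\ref{sub} gives $\tau\res_K \preceq \sigma\res_K$ and $\tau/K \preceq \sigma/K$ directly, and the compact case forces $\tau\res_K = \sigma\res_K$ without any modularity or product-topology construction. The genuine gap is the residual case $G/K \cong \R^n$. You say it is ``handled by the analogous argument for $\R^n$ (also divisible)'', but Fact~\ref{AbelCom} is stated only for \emph{compact} abelian groups, and divisibility by itself does not imply ``no successors'' --- that implication is precisely the content of the general locally compact characterization in \cite{PHTX}. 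So your reduction to factors does not actually avoid invoking \cite{PHTX}; it merely relocates the appeal to the Euclidean subcase. For a genuinely independent argument you would still have to prove separately that the usual topology on $\R^n$ (or at least on $\R$) admits no successor, and that is itself a nontrivial fact not covered by anything in the present paper.
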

In this note, we continue our investigation into the existence of successors, with a particular focus on non-abelian connected locally compact groups.
Since generally the lattice $\mathcal{L}(G)$ is not modular, the study is much more difficult.

The main results of this note are as follows: Let $(G, \tau)$ be a connected locally compact group.
\begin{itemize}
  \item (Theorem \ref{main1}) If $(G, \tau)$ is compact, then $\tau$ admits a precompact successor if and only if $(G, \tau)$ admits a discontinuous homomorphism into a simple connected compact group with a dense image.
  \item (Theorem \ref{solLC}) If $G$ is solvable, then $\tau$ admits no successors.
\end{itemize}

We would like to note that in \cite{XHP}, the authors proved that the topology of locally compact minimal group $\mathbb{R}\rtimes \mathbb{R}^+$ has no successors, where $\mathbb{R}^+$ is the multiplicative group of positive real numbers and the action is non-trivial. Since this group is connected and solvable, it is an immediate corollary of our new result.

\subsection{Subgroup and quotient group topologies}
Let $(G, \tau)$ be a topological group and $N$ a closed subgroup. We denote by $\tau\res_N$ the subspace topology of $N$ and by $\tau/N$ the quotient topology of the coset space $G/N$.
The following result, known as Merson's Lemma, plays a crucial role in the study of successors as well as minimal topological groups:
\begin{fact}\label{Mer}\cite[Lemma 7.2.3]{DPS89}
Let $G$ be a group and $N$ a subgroup. If two group topologies $\tau$ and $\sigma$ of $G$ satisfy
\begin{itemize}
  \item[(1)] $\tau\subseteq \sigma$;
  \item[(2)] $\tau\res_N=\sigma\res_N$; and
  \item[(3)] $\tau/N=\sigma/N$,
\end{itemize}
then they coincide.
\end{fact}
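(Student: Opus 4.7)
The plan is to verify $\sigma\subseteq\tau$, so that combined with (1) we obtain $\sigma=\tau$. Since both $\sigma$ and $\tau$ are group topologies and hence homogeneous, it suffices to show that every $\sigma$-open neighborhood of the identity $e$ contains a $\tau$-open neighborhood of $e$. So fix a $\sigma$-open $U\ni e$, and, by $\sigma$-continuity of multiplication, choose a symmetric $\sigma$-open $V$ with $V\cdot V\subseteq U$.

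I extract two $\tau$-open neighborhoods of $e$ from the hypotheses, working in parallel. From (3): since the quotient map $\pi\colon G\to G/N$ is open, $\pi(V)=VN/N$ is $\sigma/N$-open; by (3) it is also $\tau/N$-open, so $VN=\pi^{-1}(\pi(V))$ is $\tau$-open in $G$. From (2): the trace $V\cap N$ is $\sigma\res_N$-open, hence $\tau\res_N$-open, so there is a $\tau$-open $A\ni e$ in $G$ with $A\cap N\subseteq V\cap N$. Using $\tau$-continuity of multiplication I further arrange $A$ to be symmetric and to satisfy $A\cdot A\cap N\subseteq V\cap N$, by shrinking $A$ inside the preimage of a suitable $\tau$-open-in-$N$ neighborhood of $e$. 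Setting $W:=A\cap VN$ gives a $\tau$-open neighborhood of $e$, and the problem reduces to showing $W\subseteq U$.

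For $x\in W$, my strategy is to produce $y\in V\cap W\cap xN$ and to decompose $x=y\cdot(y^{-1}x)$. Granting such a $y$, the first factor lies in $V$ by choice of $y$, while $y,x\in A$ and $y^{-1}x\in N$ force $y^{-1}x\in A\cdot A\cap N\subseteq V\cap N\subseteq V$; hence $x\in V\cdot V\subseteq U$, as desired.

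The main obstacle is the existence of $y\in V\cap A\cap xN$ for every $x\in W$. Both $V\cap xN$ and $A\cap xN$ are nonempty open subsets of the coset $xN$ in its common subspace topology (by (2) and its left translates), but \emph{a priori} they need not meet, so the decomposition is not automatic. Ruling out this obstruction is the delicate core of Merson's lemma: it is at this step that hypotheses (2) and (3) must be used simultaneously, using (3) to propagate the subgroup-level control afforded by (2) across cosets sitting inside the saturated $\tau$-neighborhood $VN$. Once this compatibility is secured the rest of the argument is a routine product decomposition.
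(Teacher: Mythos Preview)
The paper does not supply its own proof of this statement (it is quoted as a fact from \cite{DPS89}), so there is no paper proof to compare against. Evaluating your argument on its own merits: the setup is the standard one, but the proof is genuinely incomplete, and you yourself say so. From $x\in A\cap VN$ you only obtain some $y\in V$ with $y^{-1}x\in N$; there is no reason for that particular $y$ to lie in $A$, so the decomposition $x=y\cdot(y^{-1}x)$ cannot be pushed through. Describing this as ``the delicate core of Merson's lemma'' and then stopping is not a proof.

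The missing idea is a single, easy use of hypothesis (1) that you have not yet exploited. Since $\tau\subseteq\sigma$, your $\tau$-open set $A$ is also $\sigma$-open, hence $A\cap V$ is a $\sigma$-open neighborhood of $e$. Then $\pi(A\cap V)$ is $\sigma/N$-open, and by (3) also $\tau/N$-open, so $(A\cap V)N=\pi^{-1}\bigl(\pi(A\cap V)\bigr)$ is $\tau$-open. Replace your $W$ by $W:=A\cap(A\cap V)N$. For $x\in W$ write $x=yn$ with $y\in A\cap V$ and $n\in N$; then $y^{-1}x=n\in A^{-1}A\cap N=AA\cap N\subseteq V\cap N\subseteq V$, whence $x=y\cdot(y^{-1}x)\in V\cdot V\subseteq U$. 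With this one-line adjustment the obstruction you flagged simply disappears; nothing delicate remains.
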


Furthermore, we have the following elementary fact regarding closed normal subgroups:

\begin{fact}\label{sub}\cite[Lemma 3.25]{HPTX2}
Let $G$ be a group and $\tau, \sigma$ be group topologies on $G$ such that $\tau\prec \sigma$.
If $N$ is a normal subgroup of $G$, then
\begin{itemize}
\item[(1)] $\tau/N\preceq \sigma/N$;
\item[(2)] $\tau\res_N\preceq \sigma\res_N$ when $N$ is additionally assumed to be central, i.e., contained in the centre of $G$.
\end{itemize}
\end{fact}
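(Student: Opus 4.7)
The plan for both parts is to argue by contradiction: assuming the conclusion fails, produce a group topology $\lambda$ on $G$ with $\tau \subsetneq \lambda \subsetneq \sigma$, which contradicts $\tau \prec \sigma$.

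For (1), suppose there exists $\rho \in \mathcal{L}(G/N)$ with $\tau/N \subsetneq \rho \subsetneq \sigma/N$. Letting $q : G \to G/N$ be the quotient homomorphism, I would set $\lambda := \tau \vee q^{-1}(\rho)$. Since $q^{-1}(\rho)$ is a group topology (being pulled back along a homomorphism), so is $\lambda$. The chain $\tau \subseteq \lambda \subseteq \sigma$ is immediate from $q^{-1}(\rho) \subseteq q^{-1}(\sigma/N) \subseteq \sigma$. Using that $q$ is open for topological groups, a direct computation of neighborhood bases yields $\lambda/N = \tau/N \vee \rho = \rho$, which is strictly between $\tau/N$ and $\sigma/N$; this forces both inclusions $\tau \subsetneq \lambda \subsetneq \sigma$ to be strict, giving the desired contradiction.

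For (2), suppose there exists $\rho$ with $\tau\res_N \subsetneq \rho \subsetneq \sigma\res_N$. The goal is to assemble a group topology $\lambda$ on $G$ with $\tau \subseteq \lambda \subseteq \sigma$ and $\lambda\res_N = \rho$, then invoke Merson's Lemma (Fact \ref{Mer}) to conclude $\tau \subsetneq \lambda \subsetneq \sigma$. Centrality of $N$ enters in two ways: the conjugation action of $G$ on $N$ is trivial, so every group topology on $N$ is automatically $G$-invariant---the basic compatibility condition for extending a topology from $N$ to $G$---and products of the form $U \cdot V$ with $V \subseteq N$ are symmetric, which simplifies the verification of multiplicative and inversive continuity. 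A natural candidate is to declare $\lambda$-neighborhoods of $e$ by combining a $\tau$-neighborhood in $G$ with a $\rho$-neighborhood of $e$ in $N$ controlling the $N$-fibre, so that the resulting topology matches $\tau$ modulo $N$ and matches $\rho$ along $N$.

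The main obstacle will be verifying that the constructed $\lambda$ is translation-invariant and restricts exactly to $\rho$ on $N$. A naive basis of the form ``$U$ is $\sigma$-open with $U \cap N \in \rho$'' fails to be translation-invariant, because translating by $g \notin N$ shifts $U \cap N$ to a slice of $U$ at the coset $g^{-1}N$ which is only $\sigma\res_N$-open, not necessarily $\rho$-open. Centrality is precisely what lets one circumvent this: working in the central-extension framework $1 \to N \to G \to G/N \to 1$ and using that the associated 2-cocycle is continuous into both $(N, \tau\res_N)$ and $(N, \sigma\res_N)$, one can arrange a compatible choice making the cocycle continuous into $(N, \rho)$ as well. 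Once such a $\lambda$ is in hand, $\lambda\res_N = \rho$ being strictly between $\tau\res_N$ and $\sigma\res_N$ yields $\lambda \neq \tau$ and $\lambda \neq \sigma$ by Merson's Lemma, completing the contradiction.
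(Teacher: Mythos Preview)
The paper does not prove this statement; it is quoted as a Fact from \cite[Lemma~3.25]{HPTX2}. So there is no in-paper proof to compare against, but let me comment on the proposal itself. Your argument for (1) is correct and is essentially the construction the paper later records as Corollary~\ref{quot}.

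Your sketch for (2), however, has a genuine gap. The cocycle description of a central extension $1 \to N \to G \to G/N \to 1$ as a topological group requires a continuous (at least local) section $s: G/N \to G$; without one there is no cocycle $c: G/N \times G/N \to N$ to speak of, and such sections need not exist for arbitrary topological groups. So ``one can arrange a compatible choice making the cocycle continuous into $(N,\rho)$'' is not justified and cannot be in this generality. The correct construction is entirely elementary and avoids cocycles: declare a neighbourhood base of $e$ for $\lambda$ to be
\[
\mathcal{B}=\{\,VU:\ V\in\mathcal{N}_\rho(N),\ U\in\mathcal{N}_\sigma(G),\ U\cap N\subseteq V\,\}.
\]
Centrality of $N$ gives $VU=UV$ and $g(VU)g^{-1}=V(gUg^{-1})$, so the group-topology axioms are immediate. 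One checks directly that $\tau\subseteq\lambda\subseteq\sigma$ (for $\tau\subseteq\lambda$ use that every $\tau\res_N$-neighbourhood contains a $\rho$-neighbourhood since $\tau\res_N\subseteq\rho$; for $\lambda\subseteq\sigma$ note $U\subseteq VU$), and that $(VU)\cap N=V(U\cap N)\subseteq VV$, which yields $\lambda\res_N=\rho$. If $\rho$ lies strictly between $\tau\res_N$ and $\sigma\res_N$, this $\lambda$ is strictly between $\tau$ and $\sigma$, contradicting $\tau\prec\sigma$. This is the missing step; once you have it, Merson's Lemma is not even needed.
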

This leads us to the following corollary:

\begin{corollary}\label{cent}
Let $G$ be a group and let $\tau$ and $\sigma$ be group topologies on $G$ such that $\tau \subseteq \sigma$. If $N$ is a normal subgroup of $G$, then $\tau \prec \sigma$ under either of the following conditions:
\begin{itemize}
\item[(1)] $\tau \res_N = \sigma \res_N$ and $\tau/N \prec \sigma/N$;
\item[(2)] $\tau \res_N \prec \sigma \res_N$ and $\tau/N = \sigma/N$.
\end{itemize}
Moreover, if $N$ is central, then this condition is also necessary for $\tau \prec \sigma$.
\end{corollary}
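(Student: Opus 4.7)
My plan has three stages: establish sufficiency of (1), establish sufficiency of (2) by symmetry, and then attack the necessity claim.

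For sufficiency of (1), the key observation is that $\tau \neq \sigma$ (because their quotients already differ) and that any $\lambda$ with $\tau \subseteq \lambda \subseteq \sigma$ is pinned on $N$: the chain $\tau\res_N \subseteq \lambda\res_N \subseteq \sigma\res_N = \tau\res_N$ forces $\lambda\res_N = \tau\res_N$. Passing to the quotient, $\tau/N \subseteq \lambda/N \subseteq \sigma/N$, combined with $\tau/N \prec \sigma/N$, leaves only $\lambda/N = \tau/N$ or $\lambda/N = \sigma/N$. In either subcase, Merson's Lemma (Fact \ref{Mer}) applied to the pair $(\tau, \lambda)$ or to $(\lambda, \sigma)$ respectively collapses $\lambda$ to the appropriate endpoint. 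Sufficiency of (2) is formally dual, interchanging the roles of restriction and quotient throughout.

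For necessity, assume $\tau \prec \sigma$ with $N$ central. Fact \ref{sub} supplies both $\tau\res_N \preceq \sigma\res_N$ and $\tau/N \preceq \sigma/N$. Merson's Lemma excludes both being equalities. If exactly one of the two relations is strict, we land in case (1) or (2), so the remaining task is to show that both cannot be strict simultaneously. I expect this to be the main obstacle.

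To rule out the ``both strict'' case I would aim to manufacture a topology $\lambda$ with $\tau \subsetneq \lambda \subsetneq \sigma$, contradicting $\tau \prec \sigma$. The natural candidate satisfies $\lambda\res_N = \sigma\res_N$ and $\lambda/N = \tau/N$, for then the strict inclusions on restriction and on quotient force $\lambda$ to sit strictly between $\tau$ and $\sigma$. To produce $\lambda$ concretely I would fix a set-theoretic section $s \colon G/N \to G$ of the quotient map $\pi$ with $s(N)=e$ and take, as a fundamental system of neighborhoods of $e$ in $\lambda$, the sets $s(W) \cdot U$ where $U$ ranges over $\sigma\res_N$-open neighborhoods of $e$ in $N$ and $W$ over $\tau/N$-open neighborhoods of $N$ in $G/N$. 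By construction $(s(W) U) \cap N = U$ and $\pi(s(W) U) = W$, so the restriction and quotient come out as desired. The technical crux is checking the group-topology axioms for this neighborhood system---continuity of multiplication at the identity and conjugation-invariance of the filter---which reduces to controlling the cocycle $\omega(a,b) = s(a)s(b)s(ab)^{-1} \in N$. Centrality of $N$ is used in an essential way: it places $\omega$ in $N$, trivializes the conjugation action of $G$ on $N$, and permits rearrangements of products of basic neighborhoods that are needed to fit products $(s(W_1)U_1)(s(W_2)U_2)$ inside another basic neighborhood $s(W)U$.
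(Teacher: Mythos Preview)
Your sufficiency arguments for (1) and (2) are correct and match the paper's proof essentially line for line: sandwich $\lambda$ between $\tau$ and $\sigma$, pin down $\lambda\res_N$, use the gap on the quotient to pin down $\lambda/N$, and invoke Merson's Lemma.

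For necessity the paper is terser than you are: it simply quotes Fact~\ref{sub} to get $\tau\res_N\preceq\sigma\res_N$ and $\tau/N\preceq\sigma/N$, and then observes via Merson that both equalities cannot hold simultaneously. You are right that this leaves the ``both strict'' case unaddressed, and you correctly isolate it as the real work.

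However, your proposed construction of the intermediate $\lambda$ has a genuine problem. With a bare set-theoretic section $s$, the cocycle $\omega(a,b)=s(a)s(b)s(ab)^{-1}$ is completely uncontrolled: for the product of two basic sets you get $s(a)\,u\,s(b)\,v = s(ab)\,\omega(a,b)\,uv$ (using centrality), and to land in $s(W)U$ you would need $\omega(a,b)\in U(uv)^{-1}$ for \emph{all} $a,b$ in some $\tau/N$-neighbourhood---but nothing forces $\omega$ to take values in a prescribed $\sigma\res_N$-small set. So $\{s(W)U\}$ need not be a neighbourhood base for a group topology, and centrality alone does not rescue this.

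The fix is to swap which side you refine. Build instead $\lambda$ with $\lambda\res_N=\tau\res_N$ and $\lambda/N=\sigma/N$: take the supremum of $\tau$ with the initial topology along $\pi\colon G\to(G/N,\sigma/N)$, i.e.\ the neighbourhood base $\{U\cap\pi^{-1}(V):U\in\mathcal N_\tau(G),\ V\in\mathcal N_{\sigma/N}(G/N)\}$. This is automatically a group topology (a join of two group topologies), is contained in $\sigma$, restricts to $\tau\res_N$ on $N$, and has quotient $\sigma/N$. If both $\tau\res_N\prec\sigma\res_N$ and $\tau/N\prec\sigma/N$ held, this $\lambda$ would sit strictly between $\tau$ and $\sigma$, contradicting $\tau\prec\sigma$. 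This is exactly the construction appearing in Corollary~\ref{quot}, and it needs no section and no cocycle estimate.
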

\begin{proof}
 Suppose (1) holds and take a group topology $\lambda$ on $G$ with $\tau\subset \lambda\subsetneqq \sigma$.
Then $$\tau\res_N=\lambda\res_N=\sigma\res_N.$$
Moreover, we have that $\tau/N\subseteq \lambda/N\subseteq \sigma/N$.
Since $\lambda\neq \sigma$, Merson's Lemma then implies that $\lambda/N\neq \sigma/N$.
And since $\tau/N\prec \sigma/N$, we obtain that
$$\tau/N=\lambda/N.$$
Using Merson's Lemma again, one has $\lambda=\tau$.
Thus $\tau\prec \sigma$.
The case (2) follows from a similar argument, we omit the proof.

To see the second assertion, let $\tau\prec \sigma$. Then by Fact \ref{sub}, we have that $\tau\res_N\preceq \sigma\res_N$ and $\tau/N\prec \sigma/N$.
Then, according to Merson's Lemma, the two ``='' cannot hold at the same time since $\tau\neq \sigma$.
\end{proof}

If $(G, \tau)$ is a topological group, we will denote by $\mathcal{N}_\tau(G)$ the filter of neighbourhoods of the identity.
Now, consider the following corollary concerning neighborhood filters, which provides insight into the behavior of successors in quotient groups:
\begin{corollary}\label{quot}
Let $(G, \tau)$ be a topological group and $N$ a normal subgroup. Suppose $\lambda$ is a group topology on $G/N$ succeeding $\tau/N$.
Then the group topology $\sigma$ on $G$, which has a neighbourhood base of
$$\{U\cap \pi^{-1}(V): U\in \mathcal{N}_{\tau}(G), V\in \mathcal{N}_{\lambda}(G/N)\},$$
is a successor of $\tau$, where $\pi$ denotes the quotient homomorphism $G\to G/N$.
In particular, if $\tau$ has no successors, then neither does $\tau/N$.
\end{corollary}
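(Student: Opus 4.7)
The plan is to recognise $\sigma$ as the join $\tau\vee\pi^{-1}(\lambda)$ in $\mathcal{L}(G)$, where $\pi^{-1}(\lambda)$ denotes the initial group topology on $G$ induced by $\pi$ and $\lambda$. Since the supremum of two group topologies is again a group topology whose neighbourhood filter at the identity is generated by pairwise intersections of basic neighbourhoods drawn from each summand, the displayed family is a legitimate base, giving $\sigma\in\mathcal{L}(G)$ and $\tau\subseteq\sigma$ (taking $V=G/N$ in the base).

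Next I would verify the two hypotheses of Corollary~\ref{cent}(1). For the trace on $N$: every $V\in\mathcal{N}_{\lambda}(G/N)$ contains $eN$, so $N\subseteq\pi^{-1}(V)$, and consequently a basic $\sigma$-neighbourhood of the identity intersected with $N$ collapses to $U\cap N$; this yields $\sigma\res_N=\tau\res_N$. For the quotient: using the set-theoretic identity $\pi(U\cap\pi^{-1}(V))=\pi(U)\cap V$ together with the openness of $\pi$ as a quotient map (for both $\tau$ and $\sigma$), a basic $\sigma/N$-neighbourhood at $eN$ has the form $\pi(U)\cap V$. Since $\pi(U)\in\mathcal{N}_{\tau/N}(G/N)\subseteq\mathcal{N}_{\lambda}(G/N)$, such intersections form a base for $\lambda$ at $eN$; conversely, every $V\in\mathcal{N}_{\lambda}(G/N)$ is already attained by choosing $U=G$. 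Hence $\sigma/N=\lambda$.

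Combining $\tau\subseteq\sigma$, $\sigma\res_N=\tau\res_N$, and $\tau/N\prec\lambda=\sigma/N$, Corollary~\ref{cent}(1) immediately delivers $\tau\prec\sigma$. The ``in particular'' assertion then follows by contraposition: any successor of $\tau/N$ would, through this construction, produce a successor of $\tau$, contradicting the hypothesis. I anticipate no substantive obstacle; the only steps demanding care are checking that the prescribed family really is a neighbourhood base for a group topology (a standard feature of suprema in $\mathcal{L}(G)$) and the elementary identity $\pi(U\cap\pi^{-1}(V))=\pi(U)\cap V$, both of which are routine.
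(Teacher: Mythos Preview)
Your argument is correct and follows exactly the route taken in the paper: verify $\sigma\res_N=\tau\res_N$ and $\sigma/N=\lambda$, then invoke Corollary~\ref{cent}(1). The paper compresses both verifications into the single phrase ``one can easily check that,'' whereas you spell out the details (the observation $N\subseteq\pi^{-1}(V)$ and the identity $\pi(U\cap\pi^{-1}(V))=\pi(U)\cap V$), but the logical skeleton is identical.
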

\begin{proof}
From the construction of the topology $\sigma$, one can easily check that
$$\sigma\res_N=\tau\res_N~\mbox{~and~}~\sigma/N=\lambda\succ \tau/N.$$
Then by Corollary \ref{cent}, $\tau\prec \sigma$.
\end{proof}
\subsection{Commutator subgroups and solvable groups}
We compile several established facts about solvable topological groups in this subsection.
For an abstract group $G$, an element is termed a {\em commutator} if it can be expressed as $ghg^{-1}h^{-1}$ for some $g,h\in G$.
The subgroup of $G$ generated by all commutators, denoted $G'$ or $[G, G]$, is called the {\em commutator subgroup} of $G$.
%If $G$ is a topological group, then $\overline{G'}$, the closure of $G'$, is known as the {\em topological commutator subgroup} of $G$.
In the context of a topological group $G$, $\overline{G'}$, the closure of $G'$, is referred to as the {\em topological commutator subgroup} of $G$.
Generally, $G'$ and $\overline{G'}$ are distinct subgroups of $G$. However, the following result by Goto is significant:
\begin{fact}\label{goto}\cite[Theorem 9.2]{HM}
Let $G$ be a connected compact group. Then $G'$ is closed in $G$ and every element of $G'$ is a commutator.
\end{fact}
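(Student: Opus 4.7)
The plan is to reduce the general case to that of a connected compact Lie group via projective limits, then reduce the Lie case to a semisimple Lie case via the standard structure theorem, and finally settle the semisimple case using maximal torus theory. The conjunction ``$G'$ closed'' together with ``every element of $G'$ is a commutator'' will fall out simultaneously once we show that the image of the (continuous, hence compact-image) commutator map $c:G\times G\to G$, $(x,y)\mapsto [x,y]$, already equals $G'$.

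\textbf{Reduction to Lie groups.} By Peter--Weyl, a connected compact group $G$ is a projective limit $G=\varprojlim_\alpha G/N_\alpha$ of connected compact Lie quotients. Granting the theorem in each $G/N_\alpha$, the image $C_\alpha$ of $c_\alpha:(G/N_\alpha)\times (G/N_\alpha)\to G/N_\alpha$ equals $(G/N_\alpha)'$ and is closed. Given $z\in G'$, its image in each $G/N_\alpha$ is a commutator $[x_\alpha,y_\alpha]N_\alpha$; by compactness of $G\times G$ the net $(x_\alpha,y_\alpha)$ has a cluster point $(x,y)$, and $[x,y]$ agrees with $z$ modulo every $N_\alpha$, hence equals $z$. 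This gives $G'=c(G\times G)$, which is compact (so closed), and every element of $G'$ is a single commutator.

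\textbf{Reduction to the semisimple case.} For a connected compact Lie group $G$, the structure theorem yields $G=Z(G)_0\cdot K$ with $K:=[G,G]$ a closed connected semisimple compact Lie subgroup, and $Z(G)_0\cap K$ finite. Because $Z(G)_0$ is central, the commutator of any two elements $z_1k_1, z_2k_2\in G$ reduces to $[k_1,k_2]\in K$, so the commutator set (and the derived group) of $G$ equals that of $K$. It therefore suffices to prove the statement inside a connected semisimple compact Lie group.

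\textbf{The semisimple case.} Let $K$ be connected semisimple compact with maximal torus $T$ and Lie algebra $\mathfrak{k}$. Every element of $K$ is conjugate to one in $T$, and the commutator set is conjugation-invariant, so it is enough to realise each $t\in T$ as a commutator. Choose $w\in N_K(T)$ whose Weyl image acts on $\mathfrak{t}$ so that $\mathrm{Ad}(w)-\mathrm{id}$ is invertible; such $w$ exists because one can decompose $\mathfrak{k}$ into simple ideals and use a Coxeter element on each factor. Writing $t=\exp(X)$, solve $(\mathrm{Ad}(w)-\mathrm{id})Y=X$ for $Y\in\mathfrak{t}$, and compute
\[
[w,\exp Y]=w(\exp Y)w^{-1}(\exp Y)^{-1}=\exp(\mathrm{Ad}(w)Y)\exp(-Y)=\exp((\mathrm{Ad}(w)-\mathrm{id})Y)=t,
\]
the middle equality using that $\mathrm{Ad}(w)Y$ and $Y$ both lie in the abelian subalgebra $\mathfrak{t}$. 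This semisimple step is where the main obstacle lies: producing the Weyl element acting fixed-point-freely on $\mathfrak{t}$, and handling the fact that $\exp$ has a kernel (so the linear equation must be solved modulo the kernel lattice) are the delicate points. The other two reductions are conceptually straightforward but require the inverse-limit compactness argument to be executed carefully.
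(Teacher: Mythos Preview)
The paper does not prove this statement; it is recorded as a cited Fact (Theorem~9.2 of Hofmann--Morris), so there is no in-paper argument to compare against. Your outline is the standard proof of Goto's theorem, essentially as presented in that reference: reduce to connected compact Lie groups via the Peter--Weyl projective limit, pass to the semisimple commutator subgroup using $G=Z(G)_0\cdot G'$, and realise each torus element as a commutator $[w,\exp Y]$ via a Weyl-group lift $w$ (a Coxeter element on each simple factor) acting without nonzero fixed vectors on $\mathfrak t$.

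Two minor clean-ups. In the projective-limit step, the pairs $(x_\alpha,y_\alpha)$ live a priori in $G/N_\alpha\times G/N_\alpha$; the cleanest way to run your compactness argument is to observe that the closed sets $F_\alpha=\{(x,y)\in G\times G:[x,y]\in zN_\alpha\}$ are nonempty (by the Lie case) and directed downward, hence have nonempty intersection by compactness of $G\times G$. Second, the worry you raise about the kernel of $\exp$ in the semisimple step is not an obstruction: pick \emph{any} $X\in\mathfrak t$ with $\exp X=t$, solve $(\mathrm{Ad}(w)-\mathrm{id})Y=X$ over $\mathbb R$, and then $[w,\exp Y]=\exp X=t$ on the nose---no lattice condition is needed.
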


For a group $G$, we define $G^{(0)}=G$ and recursively $G^{(n+1)}=[G^{(n)}, G^{(n)}]$, for $n\geq 0$. Since each $G^{(n+1)}$ is a characteristic subgroup of $G$, meaning it is invariant under any automorphism of $G$, every $G^{(n)}$ is a normal subgroup of $G$.
The group $G$ is termed {\em solvable} if $G^{(n)}=\{1\}$ for some integer $n$.

\begin{proposition}\label{abelnorm}
Let $G$ be a connected solvable topological group. Then $G$ has a non-trivial closed connected normal abelian subgroup.
\end{proposition}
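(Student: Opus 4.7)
My approach is induction on the derived length $n$ of $G$ (we may and do assume $G$ is non-trivial; otherwise the conclusion fails). The base case $n=1$ is immediate: $G$ itself is then abelian and serves as a non-trivial closed connected abelian normal subgroup of itself.

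For the inductive step ($n\ge 2$), set $H:=\overline{G^{(n-1)}}$. Because $G^{(n-1)}$ is non-trivial (the derived length is exactly $n$), abelian (as $G^{(n)}=\{1\}$) and characteristic in $G$, $H$ is a non-trivial closed normal abelian subgroup. Its identity component $H_0$ is automatically closed, connected and abelian, and is normal in $G$ since conjugations preserve $H$ and hence also its identity component. If $H_0\neq\{1\}$ we are done. Otherwise $H$ is totally disconnected, and I would invoke the well-known fact that a closed totally disconnected normal subgroup of a connected topological group lies in the centre: for each $h\in H$ the map $g\mapsto ghg^{-1}$ sends the connected group $G$ into the totally disconnected space $H$, hence is constant. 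Thus $H\subseteq Z(G)$. Then $G/H$ is connected, solvable of derived length at most $n-1$ (since $G^{(n-1)}\subseteq H$), and non-trivial (for $G=H$ would make $G$ both connected and totally disconnected), so the inductive hypothesis yields a non-trivial closed connected abelian normal subgroup of the form $K/H$, where $K\supsetneq H$ is closed and normal in $G$.

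The claim is then that the identity component $K_0$ of $K$ is the desired subgroup. Normality of $K_0$ in $G$ follows as above, and abelianness from the observation that $[K,K]\subseteq H$ (because $K/H$ is abelian), combined with the fact that $K_0\cap H$ is simultaneously a closed subgroup of the connected group $K_0$ and of the totally disconnected group $H$, hence trivial; therefore $[K_0,K_0]\subseteq K_0\cap H=\{1\}$. The main obstacle is showing that $K_0$ is itself non-trivial: a priori $K$ could be totally disconnected even though $K/H$ is non-trivially connected. This is the only point where I would use local compactness (implicit in the intended applications): if $K_0=\{1\}$ then the closed subgroup $K$ is totally disconnected and locally compact, hence by van Dantzig's theorem admits a neighbourhood base of compact open subgroups at the identity; their images under $K\to K/H$ form a neighbourhood base of open subgroups in $K/H$, forcing $K/H$ to be totally disconnected, which contradicts its connectedness and non-triviality. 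Thus $K_0\neq\{1\}$, completing the induction.
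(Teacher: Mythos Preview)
Your argument has two genuine gaps.

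First, the assertion that $K_0\cap H$ is trivial because it is ``a closed subgroup of the connected group $K_0$ and of the totally disconnected group $H$'' is simply false: a closed subgroup of a connected group need not be connected (e.g.\ $\mathbb{Z}\leq\mathbb{R}$). What you actually need is that $[K_0,K_0]$ is trivial, and the way to see this is that $[K_0,K_0]$ is itself \emph{connected} whenever $K_0$ is connected --- it is an increasing union of the connected sets $X^i$, where $X=\{aba^{-1}b^{-1}:a,b\in K_0\}$ is the continuous image of $K_0\times K_0$ --- and hence, being contained in the totally disconnected $H$, must be trivial. But notice that this repair is exactly the observation that drives the paper's much shorter proof: once you know that the commutator subgroup of a connected group is connected, induction gives immediately that each $G^{(k)}$ is connected, so $\overline{G^{(n-1)}}$ is already the desired subgroup and the whole machinery of passing to $G/H$, lifting $K$, and examining $K_0$ becomes unnecessary.

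Second, and more seriously, the proposition is stated for an arbitrary connected solvable topological group, with no local compactness hypothesis. Your appeal to van Dantzig's theorem to rule out $K_0=\{1\}$ therefore proves only a weaker statement than the one asserted. The paper's argument avoids this entirely: since it never passes to a quotient and lifts back, the question of whether a totally disconnected extension of a totally disconnected group can have a connected quotient never arises.
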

\begin{proof}
Let $G$ be a connected solvable topological group. Define its derived series as:
$$G=G^{(0)}\geq G^{(1)}\geq G^{(2)}\geq...\geq G^{(n)}=\{1\},$$
where $G^{(k+1)}=[G^{k}, G^{k}]$~for $k\geq 0$.
%To prove the proposition, we aim to show that $G$ has a non-trivial closed connected normal abelian subgroup.
 Notice that
$G^{(n-1)}$ is an abelian subgroup of $G$, we need to demonstrate that
it is also  connected.

The connectedness of $G^{(0)}$ follows from our assumption that $G$ is connected.
 Suppose $G^{(k-1)}$
  is connected for some $k\geq 1$. We will show by induction that $G^{(k)}$
  is also connected.
Consider the set $X:=\{xyx^{-1}y^{-1}| x,y\in G^{(k-1)}\}$, which is connected because $G^{(k-1)}$ is connected. Then
$$G^{(k)}=[G^{(k-1)},G^{(k-1)}]=\bigcup_{i=1}^\infty X^i,$$
where $X^i$ is defined as the set $\{x_1x_2...x_i: x_1, x_2,...,x_i\in X\}$.
Since $X^i$ contains the identity, the chain $\{X^1, X^2,..., X^i,...\}$ is increasing.
As a continuous image of $\underbrace{X\times X\times...\times X}_{i~\text{copies}}$, $X^i$ is connected.
So, $G^{(k)}$ is connected.
In particular, $G^{(n-1)}$ is connected.
So its closure is a required group.
Thus, we have shown that a connected solvable topological group  always possesses a non-trivial closed connected normal abelian subgroup.
\end{proof}

\section{Main Results}
From here on, unless otherwise stated, topological groups are always assumed to be Hausdorff.

\begin{lemma}\label{absub}
Let $(G, \tau)$ be a connected locally compact group and $A$ a compact normal abelian subgroup. Then any successor $\sigma$ of $\tau$ coincides with $\tau$ on $A$.
\end{lemma}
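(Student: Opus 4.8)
The plan is to reduce the statement, via Pontryagin duality and Corollary~\ref{cent}, to excluding a single discontinuous character on $A$, and then to destroy that character by completing $(G,\sigma)$ and invoking Goto's theorem.

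\emph{Reduction.} Since $A$ is compact abelian, its Pontryagin dual $\widehat A$ is discrete. The conjugation map $G\times A\to A$ is continuous, so for each character the induced orbit map $G\to\widehat A$ is a continuous map into a discrete space, hence constant on the connected group $G$; as characters separate points, $A$ is central. By Fact~\ref{sub}(2) we then have $\tau\res_A\preceq\sigma\res_A$, and it suffices to exclude strict inequality. If $\tau\res_A\prec\sigma\res_A$, then Corollary~\ref{cent} forces $\sigma/A=\tau/A$ and makes $\sigma\res_A$ a successor of the compact abelian topology $\tau\res_A$; by Fact~\ref{AbelCom} this successor comes from a $\tau\res_A$-dense subgroup $H\le A$ of prime index $p$, that is, from a $\tau\res_A$-discontinuous surjection $\psi\colon A\to\Z/p\Z$ with $\ker\psi=H$. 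Because the identity component $A_0$ is connected compact abelian, it is divisible, so $A_0=pA_0\subseteq pA\subseteq H$; replacing $(G,\tau)$ by $(G/A_0,\tau/A_0)$, which is again connected and locally compact, I may therefore assume that $A$ is profinite.

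\emph{Completion.} The heart of the argument is to pass to the Raikov completion $\tilde G$ of $(G,\sigma)$, with $\tilde A$ the closure of $A$. Since $\sigma\res_A$ has a base of open subgroups, $\tilde A$ is a profinite central subgroup, while $\sigma/A=\tau/A$ yields $\tilde G/\tilde A\cong B:=(G/A,\tau/A)$, which is connected and locally compact. I would then show that $\tilde G$ splits as $\tilde G_0\times\tilde A$: the kernel $\tilde A$ is compact, so the projection $q\colon\tilde G\to B$ is proper, whence $q(\tilde G_0)$ is closed; it is connected with totally disconnected quotient $\tilde G/\tilde G_0\tilde A$, hence $q(\tilde G_0)=B$. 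As $\tilde G_0\cap\tilde A$ lies in the trivial component of the profinite group $\tilde A$, the map $\tilde G_0\to B$ is a continuous bijection of locally compact groups that are $\sigma$-compact (being connected), so an isomorphism by the open mapping theorem. Composing the resulting projection $\tilde G\to\tilde A$ with the continuous character $\tilde A\to\Z/p\Z$ extending $\psi$ and restricting to $G$, I obtain a homomorphism $\Psi\colon G\to\Z/p\Z$ with $\Psi\res_A=\psi$, which is consequently surjective.

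\emph{Contradiction and the main obstacle.} To finish, choose a maximal compact subgroup $M$ of $G$ containing $A$; since $G$ is connected, $M$ is compact and connected, and $\Psi\res_A=\psi\neq 0$ forces $\Psi(M)=\Z/p\Z$. But Goto's theorem (Fact~\ref{goto}) gives that $M'$ is closed and $M/M'$ is a connected compact abelian, hence divisible, group; since every homomorphism to $\Z/p\Z$ kills $M'$, no surjection $M\to\Z/p\Z$ exists, a contradiction (when $G$ is already compact one simply takes $M=G$). I expect the completion step to be the delicate part: identifying $\tilde A$ as profinite and, above all, establishing the splitting $\tilde G\cong\tilde G_0\times\tilde A$ in order to manufacture the global character $\Psi$. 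This is precisely where connectedness and local compactness are used in an essential way (through $\sigma$-compactness and the open mapping theorem, and through the existence of a connected maximal compact subgroup), whereas the remaining steps are formal consequences of Facts~\ref{sub}, \ref{AbelCom}, \ref{goto} and Corollary~\ref{cent}.
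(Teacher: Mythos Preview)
Your completion/splitting step has a genuine gap. The assertion that ``$\tilde G_0\cap\tilde A$ lies in the trivial component of the profinite group $\tilde A$'' does not give $\tilde G_0\cap\tilde A=\{e\}$: all it says is that this intersection is a totally disconnected closed subgroup of $\tilde A$, which is automatic. A connected locally compact group can perfectly well contain a nontrivial totally disconnected (even finite) central subgroup---think of $Z(\SU(2))$, or of the double cover $\R/2\Z\to\R/\Z$ with kernel $\Z/2\Z$ sitting inside the connected group $\R/2\Z$. So $q\res_{\tilde G_0}\colon\tilde G_0\to B$ need not be injective, the splitting $\tilde G\cong\tilde G_0\times\tilde A$ is unjustified, and you have no way to manufacture the global character $\Psi\colon G\to\Z/p\Z$. (There is also a smaller issue: Fact~\ref{AbelCom} only asserts the \emph{existence} of a dense prime-index subgroup; you are using the stronger statement from \cite{PHTX} that every successor of a compact abelian topology is precompact and arises from such a subgroup.)

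The paper's argument is far shorter and sidesteps all of this. After observing that $A$ is central (Iwasawa), one simply embeds $A$ into a \emph{connected} compact abelian subgroup $B$ of $G$: by \cite[Theorem~13]{Iwa} $A$ lies in a connected compact subgroup $N$, and by \cite[Theorem~9.32]{HM} every abelian subgroup of a connected compact group is contained in a connected abelian one. If $\sigma\res_A\ne\tau\res_A$, then $\sigma\res_A\succ\tau\res_A$ and $\sigma/A=\tau/A$; restricting to $B$ and applying Corollary~\ref{cent} gives $\tau\res_B\prec\sigma\res_B$, contradicting Fact~\ref{AbelLC} because $(B,\tau\res_B)$ is connected compact abelian. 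No completions, no open mapping theorem, no structure of successors on $A$ beyond Fact~\ref{sub} is needed.
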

\begin{proof}
By a result of Iwasawa \cite[Theorem 4]{Iwa}, a compact normal abelian subgroup of a connected topological group must be central. Hence, by Fact \ref{sub}, if $\sigma\res_A \neq \tau\res_A$, then $\sigma/A = \tau/A$ and $\sigma\res_A$ is a successor of $\tau\res_A$.
We claim that that $A$ must be contained in a connected abelian subgroup $B$ of $G$.
 According to \cite[Theorem 13]{Iwa}, $A$ is contained in a connected compact subgroup $N$ of $(G, \tau)$.
 By \cite[Theorem 9.32]{HM}, every abelian subgroup of a connected compact group is contained in a connected one.
 So the assertion holds.

Let $\tau_B = \tau\res_B$ and $\sigma_B = \sigma\res_B$. From this setup, we deduce that $$\sigma_B\res_A = \sigma\res_A \succ \tau\res_A = \tau_B\res_A$$ and $$\sigma_B/A = (\sigma/A)\res_{B/A} = (\tau/A)\res_{B/A} = \tau_B/A.$$
 Consequently, by Corollary \ref{cent}, $\tau_B \prec \sigma_B$. This, however, contradicts Fact \ref{AbelLC}, given that $(B, \tau_B)$ is a connected compact abelian group.
\end{proof}
Now, using Corollary \ref{cent}, the following is evident.
\begin{corollary}\label{connsub}
For a connected locally compact group $(G, \tau)$ and a compact normal abelian subgroup $A$, any successor $\sigma$ of $\tau$ ensures that $\sigma/A$ succeeds $\tau/A$.
\end{corollary}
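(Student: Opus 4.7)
The plan is to combine the previous lemma with the ``moreover'' part of Corollary \ref{cent}. Specifically, since $A$ is a compact normal abelian subgroup of a connected topological group, Iwasawa's theorem (as already invoked in the proof of Lemma \ref{absub}) guarantees that $A$ is contained in the centre of $G$. This puts us in the situation where the necessity clause of Corollary \ref{cent} applies.

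Concretely, I would argue as follows. Given $\tau \prec \sigma$ and $A$ central, Corollary \ref{cent} says that one of the two alternatives must hold: either (i) $\tau\res_A = \sigma\res_A$ and $\tau/A \prec \sigma/A$, or (ii) $\tau\res_A \prec \sigma\res_A$ and $\tau/A = \sigma/A$. Lemma \ref{absub} rules out alternative (ii), because it asserts that every successor of $\tau$ agrees with $\tau$ on $A$, so $\tau\res_A = \sigma\res_A$ and in particular $\tau\res_A \not\prec \sigma\res_A$. Therefore alternative (i) is forced, which is exactly the conclusion $\tau/A \prec \sigma/A$.

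There is essentially no obstacle here: the hard work has already been done in Lemma \ref{absub} (which itself leaned on Iwasawa's centralizing theorem and Fact \ref{AbelLC}) and in Corollary \ref{cent}. The only thing to watch is making sure one cites the necessity direction of Corollary \ref{cent} rather than its sufficiency direction, and that Lemma \ref{absub} is applied to conclude equality of the restrictions (not merely $\preceq$). Given how short the argument is, I would write it as a two- or three-line proof rather than break it into steps.
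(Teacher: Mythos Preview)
Your proposal is correct and matches the paper's approach: the paper simply writes ``using Corollary~\ref{cent}, the following is evident,'' which unpacks exactly as you describe---centrality of $A$ via Iwasawa, then the necessity clause of Corollary~\ref{cent}, with Lemma~\ref{absub} eliminating alternative~(2).
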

Recall that a connected compact group is called {\em semisimple} if its centre is totally disconnected \cite{HM}.
We will see that when we study the existence of successors of a connected compact group topology, we can only consider the ``semisimple part''.
\begin{corollary}\label{comm}
For a connected compact group $(G, \tau)$, any successor $\sigma$ of $\tau$ ensures that $\sigma\res_{G'}$ is a successor of $\tau\res_{G'}$, where $G'$ is the commutator subgroup of $G$.
\end{corollary}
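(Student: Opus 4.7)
My plan proceeds in three steps: establish $\tau/G'=\sigma/G'$, upgrade this to the strict inclusion $\tau\res_{G'}\subsetneq\sigma\res_{G'}$, and then rule out intermediate topologies on $G'$ via a product-decomposition argument.

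First, $(G/G',\tau/G')$ is a connected compact abelian group, so by Fact \ref{AbelLC} its topology has no successor; combined with Fact \ref{sub}(1), which gives $\tau/G'\preceq\sigma/G'$, this forces $\tau/G'=\sigma/G'$. Since $\tau\neq\sigma$, Merson's Lemma (Fact \ref{Mer}) then yields $\tau\res_{G'}\neq\sigma\res_{G'}$, and $\tau\subseteq\sigma$ upgrades this to $\tau\res_{G'}\subsetneq\sigma\res_{G'}$.

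Now suppose, for contradiction, that some group topology $\mu$ on $G'$ satisfies $\tau\res_{G'}\subsetneq\mu\subsetneq\sigma\res_{G'}$. I would invoke the structure theorem for connected compact groups (see, e.g., \cite[Theorem 9.24]{HM}): $G=Z_0\cdot G'$, where $Z_0:=Z(G)_0$ is the identity component of the centre and $Z_0\cap G'$ is a central totally disconnected subgroup. The multiplication map $q\colon Z_0\times G'\to G$ is then a continuous surjective homomorphism with closed central kernel $K=\{(z,z^{-1}):z\in Z_0\cap G'\}$; since $Z_0\times G'$ is compact and $G$ is Hausdorff, the induced map $(Z_0\times G')/K\to G$ is a topological group isomorphism. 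Applying Lemma \ref{absub} to the compact normal abelian subgroup $Z_0$ further gives $\sigma\res_{Z_0}=\tau\res_{Z_0}$; hence $\tau$ and $\sigma$ on $G$ are realised as the quotients by $K$ of the product topologies $\tau\res_{Z_0}\times\tau\res_{G'}$ and $\tau\res_{Z_0}\times\sigma\res_{G'}$ on $Z_0\times G'$, respectively.

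I would then define $\lambda$ on $G$ to be the quotient by $K$ of the product topology $\tau\res_{Z_0}\times\mu$ on $Z_0\times G'$. This is a group topology on $G$ (product and quotient of group topologies, with $K$ closed), and since $\tau\res_{Z_0}\times\tau\res_{G'}\subsetneq\tau\res_{Z_0}\times\mu\subsetneq\tau\res_{Z_0}\times\sigma\res_{G'}$, passing to the $K$-quotients yields $\tau\subsetneq\lambda\subsetneq\sigma$, contradicting $\tau\prec\sigma$. The hardest part is precisely the production of $\lambda$: it depends on the Levi-type splitting $G=Z_0\cdot G'$ afforded by the structure theorem, and on Lemma \ref{absub} to pin down the $Z_0$-factor of $\sigma$ so that varying $\mu$ alone interpolates between $\tau$ and $\sigma$; without these inputs, extending $\mu$ from $G'$ to a group topology on $G$ compatible with both $\tau$ and $\sigma$ would be considerably more delicate.
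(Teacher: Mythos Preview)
Your overall strategy is sound, but there are two unjustified steps, and the more serious one is a genuine gap.

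First, the assertion that $\sigma$ is realised as the $K$-quotient of $\tau\res_{Z_0}\times\sigma\res_{G'}$ is not automatic: the multiplication map $q$ from $(Z_0\times G',\tau\res_{Z_0}\times\sigma\res_{G'})$ to $(G,\sigma)$ is certainly continuous, but you need it to be a quotient map, and you have lost compactness on the $G'$-factor. This can be repaired (for instance via Merson's Lemma, using that $Z_0$ is $\sigma$-compact so that $(G,\sigma)\to(G/Z_0,\sigma/Z_0)$ is closed and restricts to a closed surjection on the $\sigma$-closed subgroup $G'$), but it needs an argument.

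The real gap is the sentence ``passing to the $K$-quotients yields $\tau\subsetneq\lambda\subsetneq\sigma$''. Quotienting does \emph{not} in general preserve strict inclusions of topologies. Concretely, with $\Delta:=Z_0\cap G'$ one computes $\lambda\res_{Z_0}=\tau\res_{Z_0}$ and $\lambda/Z_0\cong\mu/\Delta$ (identifying $G/Z_0$ with $G'/\Delta$), so $\tau=\lambda$ is equivalent to $\tau'/\Delta=\mu/\Delta$ and $\lambda=\sigma$ to $\mu/\Delta=\sigma'/\Delta$. Nothing you have said rules out, say, $\tau'/\Delta=\mu/\Delta$ with $\tau'\res_\Delta\subsetneq\mu\res_\Delta$; in that case your $\lambda$ collapses to $\tau$. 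The missing ingredient is precisely that $\mu\res_\Delta=\tau'\res_\Delta=\sigma'\res_\Delta$, which follows from Lemma~\ref{absub} applied to the compact normal abelian subgroup $\Delta$ of $(G,\tau)$; once you know this, Merson's Lemma on $G'$ forces $\tau'/\Delta\subsetneq\mu/\Delta\subsetneq\sigma'/\Delta$, and your contradiction goes through.

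This is exactly the lever the paper pulls, but more directly: it records $\sigma'\res_\Delta=\tau'\res_\Delta$ (Lemma~\ref{absub}) and $\sigma'/\Delta\succ\tau'/\Delta$ (via the identification $G'/\Delta\cong G/Z_0$ and Corollary~\ref{cent} applied with $N=Z_0$), and then invokes Corollary~\ref{cent} once more with $N=\Delta$ to conclude $\tau'\prec\sigma'$. Your product-and-quotient detour can be made to work, but only after importing the same control on $\Delta$ that makes the paper's shorter route possible.
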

\begin{proof}
Recall that $G'$ is a connected closed subgroup of $G$ (see Fact \ref{goto}). Let $Z_0$ be the identity component of the centre of $G$. According to \cite[Theorem 9.24]{HM}, $G = Z_0 G'$ and $\Delta := Z_0 \cap G'$ is a totally disconnected central subgroup of $G$. Let $\tau' = \tau\res_{G'}$ and $\sigma' = \sigma\res_{G'}$. By Lemma \ref{absub}, we have that

\begin{equation}\label{e1}\sigma\res_{\Delta} = \tau\res_{\Delta}\end{equation}
 and
\begin{equation}\label{e2}\sigma\res_{Z_0} = \tau\res_{Z_0}.\end{equation}

We then derive from (\ref{e1}) that
\begin{equation}\label{e3}\sigma'\res_\Delta = \sigma\res_\Delta = \tau\res_\Delta = \tau'\res_\Delta.\end{equation}

 Furthermore, by (\ref{e2}) and Corollary \ref{cent}, we obtain that
\begin{equation}\label{e4}\sigma/Z_0 \succ \tau/Z_0.\end{equation}

 Now let $\lambda$ be either $\sigma$ or $\tau$ and consider the canonical mapping
 $$\pi: (G, \lambda)\to (G/Z_0, \lambda/Z_0).$$
 By (\ref{e2}), $\lambda\res_{Z_0}$ is compact. So $\pi$ is a closed mapping. Since $G'$ is $\tau$-closed (hence, $\lambda$-closed), the restriction of $\pi$ to $G'$ remains closed.
 Then it follows from the equality $G=Z_0G'$ that $$(G'/\Delta, \lambda/\Delta)\cong\pi(G')=(G/Z_0, \lambda/Z_0).$$
 So by (\ref{e4}), we have that
 \begin{equation}\label{e5}\sigma'/\Delta \succ \tau'/\Delta.\end{equation}
 In summary, (\ref{e3}) together with (\ref{e5}) imply that $\sigma'\succ \tau'$, by Corollary \ref{cent}.
\end{proof}
Another important result is the following lemma:
\begin{lemma}\label{totcent}
Let $G$ be a connected topological group with $N$ being a totally disconnected closed normal subgroup. Then $N$ is central.
\end{lemma}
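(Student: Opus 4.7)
The plan is to exploit the classical interplay between connectedness of $G$ and total disconnectedness of $N$ via the conjugation action. Fix an arbitrary element $n\in N$ and consider the map
$$\phi_n\colon G\to N,\qquad \phi_n(g)=gng^{-1}.$$
This map is well-defined because $N$ is normal, and it is continuous since multiplication and inversion in $G$ are continuous. Its image $\phi_n(G)$ is then a connected subset of $N$ (as the continuous image of a connected space).

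Since $N$ is totally disconnected, its only connected subsets are singletons. Therefore $\phi_n(G)$ consists of a single point, and evaluating at the identity gives $\phi_n(1)=n$, so $\phi_n(g)=n$ for every $g\in G$. In other words, $gn=ng$ for all $g\in G$, which means $n$ lies in the centre of $G$. As $n\in N$ was arbitrary, this shows $N$ is contained in the centre of $G$, i.e.\ $N$ is central.

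I expect no genuine obstacle here: the entire argument rests on the elementary topological fact that continuous images of connected spaces are connected, combined with the hypothesis that $N$ is normal (to guarantee $\phi_n$ actually lands in $N$ rather than merely in $G$, so that total disconnectedness of $N$ can be invoked). No Lie-theoretic or structural results about locally compact groups are needed, and in particular the lemma does not require any local compactness hypothesis on $G$ or $N$.
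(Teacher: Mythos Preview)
Your proof is correct and follows essentially the same approach as the paper: both argue that for each $n\in N$ the conjugation map $g\mapsto gng^{-1}$ has connected image in the totally disconnected group $N$, hence constant equal to $n$. Your write-up is slightly more explicit about evaluating at the identity, but the idea is identical.
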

\begin{proof}
The action $\varphi: G\times N\to N, (g,x)\mapsto gxg^{-1}$ is continuous.
Therefore for any $x\in N$, its orbit, as a continuous image of $G$, must be connected. Since $N$ is totally disconnected, this orbit can only be the singleton $\{x\}$.
This implies that the action $\varphi$ is trivial. Hence, $N$ is in the centre of $G$.
\end{proof}
\begin{proposition}
Let $(G, \tau)$ be a connected compact group and $G'$ the commutator subgroup. Then $\tau$ has a successor if and only if $\tau\res_{G'}$ has a successor.
\end{proposition}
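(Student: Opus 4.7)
The forward direction is immediate from Corollary \ref{comm}: any successor $\sigma$ of $\tau$ restricts to a successor $\sigma\res_{G'}$ of $\tau\res_{G'}$. So the heart of the matter is the reverse direction, and my plan is to reverse-engineer the scheme used in the proof of Corollary \ref{comm}, passing now from a successor on $G'$ to a successor on $G$ via the common quotient $G/Z_0 \cong G'/\Delta$, where $Z_0$ is the identity component of the centre of $G$ and $\Delta = Z_0 \cap G'$.

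Concretely, start with a successor $\sigma'$ of $\tau' := \tau\res_{G'}$. The first step is to note that $\Delta$ is a closed normal subgroup of the connected compact group $G'$; by Lemma \ref{totcent} (applied to $G'$) it is central, and in particular abelian. Lemma \ref{absub} then forces
\begin{equation*}
\sigma'\res_\Delta = \tau'\res_\Delta,
\end{equation*}
so by Corollary \ref{cent} we obtain $\sigma'/\Delta \succ \tau'/\Delta$ as group topologies on $G'/\Delta$.

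The second step is to transport this successor across the canonical isomorphism
\begin{equation*}
(G'/\Delta,\, \tau'/\Delta) \;\cong\; (G/Z_0,\, \tau/Z_0),
\end{equation*}
which was established (for any topology $\lambda$ on $G$ in place of $\tau$) in the proof of Corollary \ref{comm} using $G = Z_0 G'$ and the fact that the quotient map $G \to G/Z_0$ is closed on $G'$ thanks to compactness of $Z_0$. The preimage of $\sigma'/\Delta$ under this isomorphism is a group topology $\lambda$ on $G/Z_0$ with $\lambda \succ \tau/Z_0$. Finally, Corollary \ref{quot} applied to the normal subgroup $Z_0$ and the successor $\lambda$ produces a group topology on $G$ that succeeds $\tau$, completing the proof.

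I do not expect any serious obstacle: once the machinery of Corollaries \ref{cent} and \ref{quot} and the structural facts $G = Z_0 G'$, $\Delta$ totally disconnected central, are in hand, the argument is essentially a bookkeeping exercise that mirrors Corollary \ref{comm} in reverse. The only point requiring minor care is checking that the lattice isomorphism between group topologies on $G/Z_0$ and on $G'/\Delta$ indeed respects the successor relation, which is clear because the isomorphism $G'/\Delta \to G/Z_0$ is an abstract group isomorphism and therefore induces an order isomorphism on the lattices of group topologies.
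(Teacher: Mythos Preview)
Your argument is correct, but the construction you use for the reverse direction differs from the paper's. The paper lifts the successor $\sigma'$ of $\tau'$ to the product $Z_0\times G'$, forming $\tau_0\times\sigma'$; it then shows $\tau_0\times\tau'\prec\tau_0\times\sigma'$ via Corollary~\ref{cent} (with the normal subgroup $Z_0\times\{1\}$) and finally descends to $G\cong(Z_0\times G')/C$ by Corollary~\ref{connsub}, using that the kernel $C\cong Z_0\cap G'$ is compact abelian and central. You instead push $\sigma'$ \emph{down} to $G'/\Delta$, invoke the isomorphism $G'/\Delta\cong G/Z_0$ to obtain a successor of $\tau/Z_0$, and then lift to $G$ by Corollary~\ref{quot}. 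Both routes rest on the same structural ingredients ($G=Z_0G'$, $\Delta$ totally disconnected central) and the same lattice machinery, but yours avoids introducing the auxiliary product group and the antidiagonal kernel $C$; in exchange it requires the extra step (via Lemma~\ref{absub} or directly Corollary~\ref{connsub} applied to $G'$) of checking that $\sigma'$ and $\tau'$ agree on $\Delta$ so that the successor relation passes to $G'/\Delta$. Either approach is entirely adequate here.
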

\begin{proof}
The ``only if'' part directly follows from Corollary \ref{comm}.

Now suppose that $\tau' := \tau\res_{G'}$ has a successor $\sigma'$.
 Again, let $Z_0$ be the identity component of the centre of $(G, \tau)$.
 Then $G = Z_0 G'$ and $Z_0 \cap G'$ is totally disconnected and hence central (see Lemma \ref{totcent}). Let $\varphi: Z_0 \times G' \to G$, naturally mapping $(z, x)$ to $zx$.
 The kernel $C$ of $\varphi$ is $\{(z, z^{-1}) : z \in Z_0 \cap G' \}$. Being topologically isomorphic to $Z_0 \cap G'$, $C$ is totally disconnected and central.
 Moreover,  $\varphi$ operates openly when $Z_0, G'$, and $G$ carry the topology induced from $\tau$.
 If we identify $G$ as $(Z_0 \times G')/C$, then $\tau = (\tau_0 \times \tau')/C$, where $\tau_0 = \tau\res_{Z_0}$.

Considering $\tau_0 \times \sigma'$ on $Z_0 \times G'$, it demonstrably coincides with $\tau_0 \times \tau'$ when restricted to $Z_0$ (while naturally aligning $Z_0$ with a subgroup of $Z_0 \times G'$).
Furthermore, we have that
$$(\tau_0 \times \sigma')/Z_0 = \sigma' \succ \tau' = (\tau_0 \times \tau')/Z_0.$$
 Thereby, according to Corollary \ref{cent}, $\tau_0 \times \tau' \prec \tau_0 \times \sigma'$. By applying Corollary \ref{connsub}, we obtain that $\tau = (\tau_0 \times \tau')/C \prec (\tau_0 \times \sigma')/C$, indicating that $\tau$ possesses a successor.
\end{proof}

\subsection{Precompact successors of connected compact group topologies}
For a precompact group, we mean a topological group $G$ such that for every identity neighbourhood $U$ there is a finite subset $F$ of $G$ such that $G=FU$.
It is well-known that a topological group is precompact if and only if its Weil completion (or, equivalently, Rajkov completion) $\varrho G$ is a compact group.
Moreover, if $G\to H$ is a continuous homomorphism of precompact groups, then there exists a (necessarily unique and surjective) continuous homomorphism $\varrho G\to \varrho H$ extending it.

In the context of precompact group topologies, it is useful to characterize when one such topology is a successor of another. This leads us to the following lemma, which provides a necessary and sufficient condition for this relationship:
\begin{lemma}\label{Crit}\cite[Theorem 4.2]{HP}
Let $G$ be a group and $\tau, \sigma$ be precompact group topologies on $G$ such that $\tau\subsetneq \sigma$.
Then $\sigma$ is a successor of $\tau$ if and only if the following holds:

The kernel of the continuous extension of identity mapping $(G, \sigma)\to (G, \tau)$ is a minimal closed normal subgroup of $\varrho(G, \sigma)$.
\end{lemma}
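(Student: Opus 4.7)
\medskip
\noindent\emph{Proof proposal for Lemma \ref{Crit}.} The plan is to identify the lattice of group topologies in the closed interval $[\tau,\sigma]$ with the reversed lattice of closed normal subgroups of $\hat G:=\varrho(G,\sigma)$ contained in $K:=\ker f$, where $f:\hat G\to\varrho(G,\tau)$ is the continuous surjective extension of the identity map of $G$. Once that correspondence is in place, the lemma follows at once, because the endpoints correspond to $\sigma\leftrightarrow\{e\}$ and $\tau\leftrightarrow K$, so $\sigma\prec\tau$ if and only if no closed normal subgroup of $\hat G$ lies strictly between $\{e\}$ and $K$.

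First I would record two preliminary observations. (a) Every group topology $\lambda$ with $\tau\subseteq\lambda\subseteq\sigma$ is automatically Hausdorff (because it refines the Hausdorff topology $\tau$) and \emph{precompact}: if $G=FU$ for a finite set $F$ and any $\sigma$-neighbourhood $U$ of the identity, then in particular for any $\lambda$-neighbourhood $V$ (which is a $\sigma$-neighbourhood) one has $G=FV$. Hence the entire interval $[\tau,\sigma]$ lies within the precompact Hausdorff group topologies. (b) Since $\tau$ is Hausdorff and $f$ restricted to $G$ is the inclusion $G\hookrightarrow\varrho(G,\tau)$, we have $K\cap G=\{e\}$.

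Next I would construct the two maps. Given $\lambda\in[\tau,\sigma]$, precompactness provides a Weil completion $\varrho(G,\lambda)$, and the continuous identities $(G,\sigma)\to(G,\lambda)\to(G,\tau)$ extend to continuous surjections $\hat G\twoheadrightarrow\varrho(G,\lambda)\twoheadrightarrow\varrho(G,\tau)$ whose composition is $f$; set $N_\lambda$ to be the kernel of the first arrow, so that $N_\lambda\subseteq K$. Conversely, given a closed normal subgroup $N\subseteq K$ of $\hat G$, the inclusion $G\hookrightarrow\hat G$ composed with the projection $\hat G\to\hat G/N$ is injective (since $G\cap N\subseteq G\cap K=\{e\}$) and has dense image; pulling back the topology of the compact group $\hat G/N$ gives a precompact Hausdorff group topology $\lambda_N$ on $G$ whose Weil completion is canonically $\hat G/N$. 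Since $\{e\}\subseteq N\subseteq K$, one checks $\tau\subseteq\lambda_N\subseteq\sigma$. That the two assignments $\lambda\mapsto N_\lambda$ and $N\mapsto\lambda_N$ are mutually inverse is then a direct consequence of the uniqueness of the Weil completion and the density of $G$ in $\hat G/N$. Order reversal is similar: $\lambda\subseteq\mu$ in $[\tau,\sigma]$ yields an extension $\varrho(G,\mu)\to\varrho(G,\lambda)$ of the identity of $G$, which in turn forces $N_\mu\subseteq N_\lambda$, and the converse is obtained by pushing the projection $\hat G\to\hat G/N_\lambda$ through $\hat G/N_\mu$.

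The main obstacle, and the step that deserves care, is the identification $\varrho(G,\lambda_N)\cong\hat G/N$, since this is what makes the correspondence a bijection rather than just a Galois connection. This hinges on the universal property of the Weil completion together with the fact that $G$ embeds as a dense subgroup of the compact group $\hat G/N$, so $\hat G/N$ must be (canonically isomorphic to) the Weil completion of $(G,\lambda_N)$. Once this is established, the lemma follows: because the correspondence is an order-reversing bijection, $\sigma$ is a successor of $\tau$ in $\mathcal{L}(G)$ exactly when there is no closed normal subgroup $N$ of $\hat G$ with $\{e\}\subsetneq N\subsetneq K$, which is precisely the statement that $K$ is a minimal closed normal subgroup of $\hat G=\varrho(G,\sigma)$.
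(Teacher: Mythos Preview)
The paper does not prove Lemma~\ref{Crit}; it is quoted verbatim from \cite[Theorem~4.2]{HP} and used as a black box, so there is no proof here to compare against. Your argument is the standard and correct one: the precompact Hausdorff group topologies on $G$ coarser than $\sigma$ are in order-reversing bijection with the closed normal subgroups of $\hat G=\varrho(G,\sigma)$ (via $N\mapsto$ the topology pulled back from $\hat G/N$), and restricting to the interval $[\tau,\sigma]$ corresponds to restricting to closed normal subgroups contained in $K$. Your observation~(a) is the essential point that makes the bijection exhaust the \emph{entire} interval $[\tau,\sigma]$ in $\mathcal L(G)$ rather than only its precompact members, and it is handled correctly. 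One small slip: in the first paragraph you write ``$\sigma\prec\tau$'' where you mean ``$\tau\prec\sigma$''.
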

Here, a {\em minimal} closed normal subgroup of a topological group is a non-trivial closed normal subgroup which does not contain any other non-trivial closed normal subgroup.

A group $G$ is called {\em divisible} if for any $g\in G$ and every positive $n\in \mathbb{Z}$, the equation $x^n=g$ has a solution in $G$.
A consequence of Pontryagin Duality tells that a compact abelian group is divisible if and only if it is connected.
Furthermore,  since every element in a connected compact group is contained in a connected compact abelian subgroup, it follows that connected compact groups are divisible. Conversely, a disconnected compact group always admits a continuous homomorphism onto a finite group, and hence cannot be divisible. In summary, a compact group is divisible if and only if it is connected, regardless of whether it is abelian or not.

The following lemma is also well-known and frequently used in the study of topological groups:

\begin{lemma}\label{div}
Let $G$ be a topological group with a dense divisible subgroup $H$. If $G$ is compact, then it is divisible.
\end{lemma}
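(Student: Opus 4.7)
The plan is to argue via the continuity of the $n$-th power map and a density-plus-compactness argument, without any use of homomorphism properties (since $x\mapsto x^n$ is not a homomorphism in the non-abelian setting).

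First I would fix $n\in\mathbb{Z}_{>0}$ and consider the continuous map $p_n\colon G\to G$ given by $p_n(x)=x^n$; continuity is immediate from the joint continuity of multiplication. The goal is to show $p_n(G)=G$.

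Next I would exploit the divisibility of $H$: for every $h\in H$ there exists $x\in H$ with $x^n=h$, so $p_n(H)=H$. Therefore $p_n(G)\supseteq p_n(H)=H$. Since $H$ is dense in $G$, the image $p_n(G)$ is dense in $G$ as well.

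Finally I would use compactness: $G$ is compact, so $p_n(G)$ is the continuous image of a compact space, hence compact, and therefore closed in the Hausdorff group $G$. A closed dense subset must be the whole space, so $p_n(G)=G$, which is exactly the statement that every element of $G$ admits an $n$-th root. Since $n$ was arbitrary, $G$ is divisible.

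There is no serious obstacle here; the only subtlety worth flagging is that $p_n$ is merely continuous (not a homomorphism when $G$ is non-abelian), which is why the argument must be phrased in terms of images of sets rather than kernels, and why compactness is essential to upgrade ``dense image'' to ``surjective.''
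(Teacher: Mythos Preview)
Your proof is correct and is essentially identical to the paper's own argument: both fix $n$, use continuity of the $n$-th power map, note that its image contains the dense subgroup $H$, and conclude surjectivity from compactness. The only difference is that you spell out the ``compact image is closed in a Hausdorff space'' step explicitly, whereas the paper leaves it implicit.
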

\begin{proof}
For a fixed positive integer $n$ the mapping
$$\varphi: G\to G, g\mapsto g^n$$
is continuous.
So $\varphi(G)$ is compact.
Since $\varphi(G)$ contains $\varphi(H)=H$, we obtain that $\varphi(G)=G$.
Thus, $G$ is divisible.
\end{proof}

Having established these foundational lemmas, we now proceed to the main result concerning the existence of precompact successors in the context of connected compact groups.

Recall that a {\em simple connected compact Lie group} is a centre-free connected compact Lie group whose Lie algebra is simple. It is well-known (and easy to check) that a connected compact group is (topologically) simple if and only if it is a simple connected compact Lie group.

\begin{theorem}\label{main1}
Let $(G, \tau)$ be a connected compact group. Then $\tau$ has a precompact successor if and only if there is a discontinuous homomorphism $(G, \tau)\to H$ with dense image, where $H$ is a simple connected compact Lie group.
\end{theorem}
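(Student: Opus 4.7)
The plan is to prove the two implications separately; the ``if'' direction is a direct application of Lemma~\ref{Crit}, while the ``only if'' direction reduces to a structural analysis of the Weil completion.

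For the ``if'' direction, assume $f\colon(G,\tau)\to H$ is discontinuous with dense image, where $H$ is a simple connected compact Lie group. I equip $G$ with the initial topology $\sigma$ induced by the graph map $\phi\colon g\mapsto(g,f(g))\in G\times H$; then $\sigma\supsetneq\tau$ (otherwise $f$ would be $\tau$-continuous), and $\sigma$ is precompact with Weil completion $\widehat{G}:=\overline{\phi(G)}\subseteq G\times H$. To apply Lemma~\ref{Crit}, I analyse the kernel $K:=\{h\in H:(1,h)\in\widehat{G}\}$ of the extension $\widehat{G}\to(G,\tau)$: density of $f(G)$ together with continuity of conjugation makes $K$ normal in $H$, so simplicity of $H$ forces $K\in\{1,H\}$. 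The case $K=1$ is excluded because it would make $\widehat{G}\to(G,\tau)$ a continuous bijection of compact Hausdorff groups and hence force $\sigma=\tau$; so $\{1\}\times H\subseteq\widehat{G}$, and any closed normal subgroup of $\widehat{G}$ contained in $K$ is stable under conjugation by $\{1\}\times H$, thus normal in the simple $H$; hence $K$ is a minimal closed normal subgroup, and Lemma~\ref{Crit} yields $\sigma\succ\tau$.

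For the ``only if'' direction, let $\sigma$ be a precompact successor of $\tau$, set $\widehat{G}:=\varrho(G,\sigma)$, and let $q\colon\widehat{G}\to(G,\tau)$ be the continuous extension of the identity, with $N:=\ker q$ a minimal closed normal subgroup of $\widehat{G}$ by Lemma~\ref{Crit}. The crux is to show that $N$ is itself a simple connected compact Lie group. Since $(G,\tau)$ is connected compact (hence divisible) and sits densely in $\widehat{G}$, Lemma~\ref{div} makes $\widehat{G}$ divisible and therefore connected. The characteristic subgroups $N^0$, $N'=[N,N]$ and $Z(N)$ are all $\widehat{G}$-normal, so minimality forces each to be trivial or $N$. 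The totally disconnected case $N^0=1$ is excluded: Lemma~\ref{totcent} makes $N$ central, minimality then forces $N$ to have no proper nontrivial closed subgroup, hence $N\cong\mathbb{Z}/p\mathbb{Z}$ (profinite abelian with only trivial and total closed subgroups); but then the algebraic section $\phi\colon G\hookrightarrow\widehat{G}$ of $q$ yields an abstract isomorphism $\widehat{G}\cong G\times\mathbb{Z}/p\mathbb{Z}$, contradicting divisibility of $\widehat{G}$. Similarly $N'=1$ is ruled out (a nontrivial central torus has proper $\widehat{G}$-normal closed subgroups), so $N$ is semisimple; and $Z(N)=N$ would force $N$ abelian against semisimplicity, so $Z(N)=1$.

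To finish the structural analysis I use the projective-limit representation of the compact group $\widehat{G}$: there is a closed normal $L\trianglelefteq\widehat{G}$ with $\widehat{G}/L$ Lie and $L\cap N=1$ (by minimality), hence $N\hookrightarrow\widehat{G}/L$ is itself a compact Lie group. Being connected, semisimple and centerless, $N$ decomposes as a finite direct product $T_1\times\cdots\times T_k$ of simple centerless connected compact Lie groups; the conjugation action of the connected $\widehat{G}$ on the finite discrete set $\{T_1,\dots,T_k\}$ is trivial, so each $T_i$ is $\widehat{G}$-normal, and minimality forces $k=1$. Now, since $\Aut(N)^0=\Inn(N)\cong N$, the continuous conjugation action of $\widehat{G}$ on $N$ yields a continuous surjective homomorphism $\rho\colon\widehat{G}\to N$ (surjective because $\rho\res_N=\mathrm{id}_N$), and $f:=\rho\circ\phi\colon(G,\tau)\to N$ has dense image and is discontinuous---the latter because, via the embedding $\widehat{G}\hookrightarrow(G,\tau)\times N$ induced by $(q,\rho)$, the topology $\sigma$ coincides with the initial topology of $g\mapsto(g,f(g))$, which would reduce to $\tau$ were $f$ continuous. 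The main obstacle is the structural deduction that $N$ is a simple connected compact Lie group---ruling out the totally disconnected case via divisibility of $G$, and ruling out multiple simple factors via connectedness of $\widehat{G}$.
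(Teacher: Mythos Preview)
Your argument is correct, and the ``if'' direction is essentially identical to the paper's. The ``only if'' direction, however, follows a genuinely different route. The paper first passes to the quotient $G/Z(G)$ (using Corollary~\ref{connsub}) so that it may assume $G=\prod_i S_i$ is centre-free, then shows that $K:=\varrho(G,\sigma)$ is centre-free as well, and finally invokes the structure theorem for centre-free connected compact groups to write $K=\prod_j T_j$ with every closed normal subgroup a partial product; this immediately gives $N=T_k$ and a global product splitting $K=N\times S$, from which the projection to $N$ is read off. Discontinuity is then established by constructing an explicit nonempty open set of $K$ missing $G^*$. You instead analyse $N$ directly: characteristic subgroups $N^0,\,N',\,Z(N)$ together with minimality force $N$ to be connected, perfect, and centreless; a single pro-Lie step shows $N$ is Lie; and connectedness of $\widehat G$ collapses the finite simple-factor decomposition to a single factor. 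Your use of $\Aut(N)^0=\Inn(N)\cong N$ to manufacture the retraction $\rho:\widehat G\to N$ is a neat replacement for the paper's global product splitting of $K$, and your embedding $(q,\rho):\widehat G\hookrightarrow(G,\tau)\times N$ gives a cleaner discontinuity argument than the paper's open-set construction. The trade-off is that the paper's reduction to the centre-free case lets it lean entirely on the off-the-shelf structure theorem, whereas your approach needs a few more ad hoc steps on $N$.

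One small point to tighten: when you rule out $N'=1$ by saying ``a nontrivial central torus has proper $\widehat G$-normal closed subgroups'', you are implicitly using that a connected compact abelian normal subgroup of the connected group $\widehat G$ is automatically central. This is true (it is Iwasawa's theorem, invoked in the paper in the proof of Lemma~\ref{absub}), but it should be cited; without centrality, it is not obvious that the proper closed subgroups of $N$ are $\widehat G$-normal. Also, at that stage $N$ is not yet known to be Lie, so ``torus'' should be read as ``connected compact abelian group''; such a group still has proper nontrivial closed subgroups (any nontrivial character has nontrivial kernel unless $N\cong\mathbb T$, and $\mathbb T$ has finite subgroups), so the conclusion stands once centrality is in place.
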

\begin{proof}
First, we assume that $(G, \tau)$ admits such a homomorphism $f: (G, \tau)\to H$.
Let $\sigma$ be the coarsest topology on $G$ which is finer than $\tau$ and makes $f$ continuous.
Then the mapping $$\varphi: (G, \sigma)\to (G, \tau)\times H, x\mapsto (x, f(x))$$ is a topological group embedding.
\begin{claim}\label{claim1}  $G^*:=\varphi(G)$ is dense in $G\times H$.\end{claim}
\begin{proof}[Proof of Claim \ref{claim1}.]
It is evident that $G^*$ cannot be closed in $G\times H$ since it is not compact.
Let $C$ be the closure of $G^*$ and $\pi: G\times H\to G$ the projection.
Since $C$ properly contains $G^*$, there exists $x\in C\setminus G^*$.
Note that $\pi(G^*)=G$, so we can take $y\in G^*$ such that $\pi(y)=\pi(x)$.
Thus, $(1,1)\neq x^{-1}y\in C\cap (\{1\}\times H).$
In other words, $C$ non-trivially meets $\{1\}\times H$.

Let $\{1\}\times N$ be the intersection of $C$ and $\{1\}\times H$, where $N$ is a non-trivial closed subgroup of $H$.
Then $\{1\}\times N$ is a closed normal subgroup of $C$.
In particular, $\{1\}\times N$ is normalized by $G^*$.
Since $G^*=\{(g, f(g)): g\in G\}$, $N$ is normalized by $f(g)$ for any $g\in G$.
Note that the normalizer of $N$ in $H$ should be a closed subgroup of $H$.
Then by the denseness of $f(G)$ in $H$, we obtain that $N$ is normal in $H$.
By our assumption that $H$ is simple, the non-trivial closed normal subgroup $N$ of $H$ must coincide with $H$.
Thus, $\{1\}\times H\subseteq C$.
It follows from $\pi(C)=G=\pi(G\times H)$ that $C=G\times H$.
That is, $G^*$ is dense in $G\times H$.
\end{proof}

Now, let us identify $(G, \sigma)$ with $G^*$. Then $\varrho (G,\sigma)=(G, \tau)\times H$ and the projection $\pi: (G, \tau)\times H\to G$ is the extension of the identity mapping $(G, \sigma)\to (G, \tau)$.
The kernel of $\pi$ is topologically isomorphic to $H$, which is a simple, hence minimal, closed normal subgroup of $(G, \tau)\times H$.
By Lemma \ref{Crit}, $\sigma$ is a successor of $\tau$.

Let us consider the conversion. Suppose that $\sigma$ is a precompact successor of $\tau$ and $Z$ is the centre of $G$.
By Corollary \ref{connsub}, we have $\tau/Z\prec \sigma/Z$.
Moreover, by the structure theorem of connected compact groups \cite[Theorem 9.24]{HM}, we know that $(G/Z, \tau/Z)$ is topologically isomorphic to a direct product of simple connected compact Lie groups.
So, we may assume without loss of generality that $(G, \tau)=\prod_{i\in I}S_i$, where each $S_i$ is a simple connected compact Lie group.
In particular, $G$ is centre-free.

The extension of the identity mapping $\varphi: (G, \sigma)\to (G, \tau)$ over $K:=\varrho(G, \sigma)$ is denoted by $\pi$.
According to Lemma \ref{Crit}, $N:=\ker \pi$ is a minimal closed normal subgroup of $K$.
We write the dense subgroup $(G, \sigma)$ of $K$ as $G^*$; then $K=NG^*$ and $N\cap G^*$ is trivial. So, algebraically, $K=N\rtimes G^*$.

\begin{claim}\label{claim2} $K$ is centre-free.
\end{claim}
\begin{proof}[Proof of Claim \ref{claim2}]
Since $G$ is divisible and $K$ is compact, it follows from Lemma \ref{div} that $K$ is also divisible, hence connected.

Let $Z$ be the centre of $K$. Since $G$ is centre-free, $Z$ must be contained in the kernel of $\pi: K\to (G, \tau)$, $N$.
Being a minimal closed normal subgroup of $K$, $N$ must coincide with $Z$ if $Z$ is non-trivial.
So, the semi-direct product $K=NG^*$ is direct.
Since every subgroup of $Z$ is normal in $K$, $Z$ cannot have any non-trivial subgroup but $Z$.
Therefore, $Z$ is a cyclic group of prime order.
If this were true, then  $K$ would not be divisible, because $N$, as a (discontinuous) homomorphism image of $K$, is not divisible.
This contradiction completes the proof of Claim 2.
\end{proof}

By the above Claim 2, using the structure theorem \cite[Theorem 9.24]{HM} again (or \cite[Theorem 9.19]{HM}), we obtain that $K$ is also of the form
$\prod_{j\in J} T_j$, where each $T_j$ is a simple connected compact Lie group.
Moreover, a closed subgroup $M$ of $K$ is normal if and only if it is a {\em partial product}, i.e., there exists a subset $J'$ of $J$ such that $M=\prod_{j\in J'} T_j$ (here we identity the latter group as a subgroup of $\prod_{j\in J}T_j$ in a natural way).
This tells us that there exists $k\in J$ such that $N=T_k$.
Let $S=\prod_{j\in J\setminus \{k\}} T_j$, then $K=N\times S$.
Thus, $S\cong K/N\cong (G, \tau)$.
Denote by $p$ the natural projection $K\to N$ with kernel $S$.
Then we obtain a homomorphism $q:=p\circ\varphi^{-1}$ of $(G, \tau)$ to $N$.
We shall see that $q$ is a desired mapping.

From $q(G)=p(G^*)$ and the denseness of $G^*$ in $K$, we know that $q(G)$ is dense in $N$.
It remains to see that $q$ is discontinuous.
To establish the discontinuity of $q$, suppose $q$ were continuous. Then $q$ would be an open surjective map due to the compactness of $(G, \tau)$.
Consider an element $x$ of $G$ distinct from the identity.
Then there exists an identity neighbourhood $U$ in $(G, \tau)$ such that $U\cap xU=\emptyset$.
Let $W=\pi^{-1}(U)\cap p^{-1}(q(xU))$.
Choose $y\in q(U)\subseteq N$ and $z\in S$ such that $\pi(z)\in x^{-1}U$. Set $g=\varphi^{-1}(x)yz$, then
$$\pi(g)=\pi(\varphi^{-1}(x))\pi(y)\pi(z)=x\cdot 1\cdot \pi(z)\in U;$$
and
$$p(g)=p(\varphi^{-1}(x))p(y)p(z)=q(x)\cdot y\cdot 1\in q(x)q(U)=q(xU).$$
Hence $W$ is a non-empty open set in $K$.

Moreover, $W\cap G^*$ is empty: if $h\in G^*$ and $\pi(h)\in U$, then $h=\varphi^{-1}(u)$ for some $u\in U$, hence
$$p(h)=p(\varphi^{-1}(u))=q(u)\in q(U).$$
Since $q(U)\cap q(xU)=\emptyset$, it follows that $p(h)\notin q(xU)$.
So $h\notin W$, that is, $W\cap G^*=\emptyset$.

While, $G^*$ is dense in $K$.
This contradiction confirms that
$q$ must indeed be discontinuous, thereby completing the proof.
\end{proof}

In the above theorem, we only consider precompact successors since there are nice structure theorems of compact groups. However, we do not know whether a compact group topology can have a successor which is not precompact.
Note that, in \cite{PHTX}, a key lemma for the characterization Fact \ref{AbelCom} says that a successor of a precompact group topology on an abelian group must be precompact.

\begin{question}
Does there exists a (connected) compact group $(G, \tau)$, such that $\tau$ admits a non-precompact successor?
\end{question}
The authors tend to believe that such groups do exist, since in \cite[Example 4.3]{HP}, an example is given to show that a precompact group topology can process non-precompact successors.

\subsection{Non-existence of successors of connected locally compact group topologies on solvable groups}

We are now going to prove our second main result.
To facilitate our exploration of group topologies, we first establish the following essential result, which elucidates the relationship between different group topologies. This lemma serves as a foundational tool in our subsequent proofs.

\begin{lemma}\label{finer}
Let $G$ be a group and $\tau,\sigma$ be group topologies on $G$.
If every non-empty $\sigma$-open set contains a non-empty $\tau$-interior, then $\sigma\subseteq \tau$.
\end{lemma}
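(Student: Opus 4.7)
My plan is to exploit homogeneity under translations, which is guaranteed because both $\tau$ and $\sigma$ are group topologies. It suffices to show that every $\sigma$-open neighbourhood $U$ of the identity is also a $\tau$-neighbourhood of the identity: then for any $\sigma$-open set $O$ and any $g\in O$, the set $g^{-1}O$ is a $\sigma$-neighbourhood of $1$, hence a $\tau$-neighbourhood of $1$, so $O$ is a $\tau$-neighbourhood of $g$, and $O\in\tau$.

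To produce a $\tau$-neighbourhood of $1$ inside a given $\sigma$-open neighbourhood $U$ of $1$, I would first use continuity of multiplication and inversion in $(G,\sigma)$ to pick a symmetric $\sigma$-open neighbourhood $V$ of $1$ with $V\cdot V\subseteq U$. By the hypothesis, $V$ contains a non-empty $\tau$-open subset $W$. Pick any $w\in W\subseteq V$. Since $\tau$ is a group topology, left translation by $w^{-1}$ is a $\tau$-homeomorphism, so $w^{-1}W$ is $\tau$-open and contains $1$. As $w\in V$ and $V$ is symmetric, $w^{-1}\in V$, hence
\[
w^{-1}W\ \subseteq\ V\cdot W\ \subseteq\ V\cdot V\ \subseteq\ U,
\]
which provides the desired $\tau$-neighbourhood of $1$ inside $U$.

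There is no real obstacle here; the only thing to be careful about is that the hypothesis only yields a $\tau$-interior point inside $V$, not necessarily at $1$, which is exactly why the translation trick is needed to recentre the $\tau$-open set at the identity. The role of the symmetric square-root $V$ of $U$ is standard: it absorbs the unknown location of $w$ while keeping everything inside $U$.
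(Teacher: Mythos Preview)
Your proof is correct and follows essentially the same approach as the paper: both arguments use the hypothesis to locate a non-empty $\tau$-open set inside a $\sigma$-neighbourhood and then use the group operations to recentre it at the identity. The paper forms $VV^{-1}\subseteq UU^{-1}$ directly and notes that the sets $UU^{-1}$ form a $\sigma$-local base at $1$, whereas you first pass to a symmetric square-root $V$ of $U$ and then translate by a single element; these are minor rearrangements of the same idea.
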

\begin{proof}
Let $U$ be a $\sigma$-neighbourhood of $1$ and $V$ a $\tau$-open set with $V\subseteq U$.
Then $VV^{-1}$ is a $\tau$-neighbourhood of $1$ which is contained in $UU^{-1}$.
Since the family of sets of form $UU^{-1}$ is a local basis at $1$ for the topology $\sigma$, we obtain that $\sigma\subseteq \tau$.
\end{proof}
Next, we shift our focus to differentiable mappings and their properties. Consider the following setup:

Let $U$ be an open set of $\mathbb{R}^m$ and $f: U\to \mathbb{R}^n$ a differentiable mapping.
Then the differential of $f$ at a point $p\in U$, denoted by $\di f_p$, linearly maps $T_p U\cong \mathbb{R}^m$ into $\mathbb{R}^n$, where $T_p U=\{p\}\times\mathbb{R}^m$ is the tangent space of $U$ at $p$.
In the following, we shall always identify $T_pU$ with $\mathbb{R}^m$.
We refer the readers who are not familiar with these notions to \cite{Kli}.
\begin{lemma}\label{geo}
Let $U$ be a connected neighbourhood of the identity in $\mathbb{R}^m$ and $f: U\to \mathbb{R}^n$ be a smooth mapping, where $m,n$ are positive integers.
If $f(U)$ is not contained in a coset of any $(n-1)$-dimensional subspace, then
$$\underbrace{f(U)+f(U)+...+f(U)}_{n~\text{\em copies}}$$
contains a non-empty open set of $\mathbb{R}^n$.
\end{lemma}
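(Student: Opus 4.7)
\medskip

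\noindent\textbf{Proof plan.} The plan is to consider the smooth mapping
$$F: U^n \to \mathbb{R}^n,\qquad F(y_1,\ldots,y_n) = f(y_1) + f(y_2) + \cdots + f(y_n),$$
whose image is exactly the $n$-fold sum appearing in the statement. I would try to locate a point $(x_1,\ldots,x_n)\in U^n$ at which $F$ is a submersion; once this is achieved, the submersion theorem guarantees that $F$ is locally open at that point, so $F(U^n)$ contains a non-empty open subset of $\mathbb{R}^n$, which is precisely what the lemma asserts.

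Writing $V_p := \mathrm{Im}(\di f_p) \subseteq \mathbb{R}^n$ for $p\in U$, the differential $\di F$ at $(x_1,\ldots,x_n)$ sends $(v_1,\ldots,v_n)$ to $\sum_{i=1}^n \di f_{x_i}(v_i)$, so its image is exactly $V_{x_1} + \cdots + V_{x_n}$. The task therefore reduces to producing $x_1,\ldots,x_n \in U$ with $V_{x_1} + \cdots + V_{x_n} = \mathbb{R}^n$. This is the step where the hypothesis enters: I claim even $\sum_{p\in U} V_p = \mathbb{R}^n$. For if this total sum were contained in some linear hyperplane $H\subsetneq \mathbb{R}^n$, fixing $p_0 \in U$ and joining an arbitrary $p\in U$ to $p_0$ by a piecewise-linear path $\gamma:[0,1]\to U$ (possible since $U$ is open and connected in $\mathbb{R}^m$, hence polygonally connected), the fundamental theorem of calculus gives
$$f(p) - f(p_0) = \int_0^1 \di f_{\gamma(t)}(\gamma'(t))\, dt \in H,$$
so $f(U) \subseteq f(p_0) + H$ would lie in a coset of an $(n-1)$-dimensional subspace, contradicting the hypothesis.

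Given this global span property, a greedy selection finishes the argument: set $W_0 := \{0\}$ and, as long as $W_i \subsetneq \mathbb{R}^n$, pick $x_{i+1}\in U$ with $V_{x_{i+1}}\not\subseteq W_i$ (such an $x_{i+1}$ exists, since otherwise $\sum_{p\in U} V_p \subseteq W_i$, contradicting the previous paragraph) and put $W_{i+1} := W_i + V_{x_{i+1}}$, which has strictly larger dimension. After at most $n$ such steps we reach $W_n = \mathbb{R}^n$, padding any remaining indices with arbitrary points of $U$ if we finish earlier. Then $\di F$ is surjective at $(x_1,\ldots,x_n)$, and the submersion theorem produces an open neighbourhood of $F(x_1,\ldots,x_n)$ inside $F(U^n)$. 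I do not anticipate a serious obstacle here; the only substantive ingredient is the connection between the affine-hull hypothesis on $f(U)$ and the span of the images $V_p$, which is handled cleanly by the line-integral argument above.
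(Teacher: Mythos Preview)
Your argument is correct and essentially the same as the paper's. Both consider the sum map $U^n\to\mathbb{R}^n$, use connectedness of $U$ to show that the images $\mathrm{Im}(\di f_p)$ collectively span $\mathbb{R}^n$ (the paper phrases this as $\di(\pi\circ f)\equiv 0$ forcing $\pi\circ f$ to be constant, rather than via a path integral, but it is the same idea), then select $n$ points at which the differential of the sum map is surjective and invoke the submersion theorem.
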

\begin{proof}
We first claim that there exist
 $$p_1, p_2,...,p_n\in U~\mbox{and}~\mathbf{v}_i\in T_{p_i} U\cong \mathbb{R}^m ~(i=1,2,...,n)$$ such that $\{\di f_{p_i}(\mathbf{v_i}): i=1,2,...,n\}$ spans the whole space $\mathbb{R}^n$.
Suppose, for the sake of contradiction, that this is not the case.
Then there exists an $(n-1)$-dimensional subspace $R$ of $\mathbb{R}^n$ such that, for any $p\in U$ and $\mathbf{v}\in T_p U$, $\di f_p(\mathbf{v})$ is in $R$.

Take a 1-dimensional subspace $L$ of $\mathbb{R}^n$ such that $\mathbb{R}^n=R\oplus L$ and let $\pi: \mathbb{R}^n\to L$ be the corresponding projection.
Then $\pi\circ f$ can be viewed as a smooth real-valued function of $U$ and satisfies
$$\di (\pi\circ f)_p(\mathbf{v})=\di \pi_{f(p)}(\di f_p(\mathbf{v}))=\mathbf{0},$$
where $p\in U$ and $\mathbf{v}\in T_p U$. The last equality follows from that $\di f_p(\mathbf{v})\in R$ and $\di \pi_{f(p)}=\pi$.
This implies that the differential of $\pi\circ f$ is $0$ at every point of $U$, so $\pi\circ f$ is constant, as $U$ is connected.
Let $\{q\}=\pi\circ f(U)$. Then $f(U)\subseteq q+R$. This contradicts to our assumption on $f(U)$.

Now let $$\varphi:\underbrace{U\times U\times ... \times U}_{n~\text{copies}}\to \mathbb{R}^n, (x_1, x_2,...,x_n)\mapsto f(x_1)+f(x_2)+...+f(x_n).$$
Then, $$T_{(x_1, x_2,...,x_n)}(U\times U\times ... \times U)=T_{x_1}U\times T_{x_2}U\times...\times T_{x_n}U$$
 and
$$\di \varphi_{(x_1,x_2,...,x_n)}(\mathbf{u_1}, \mathbf{u_2},...,\mathbf{u_n})=\di f_{x_1}(\mathbf{u_1})+\di f_{x_2}(\mathbf{u_2})+...+\di f_{x_n}(\mathbf{u_n}).$$
In particular, we have
$$\di \varphi_{(p_1, p_2,...,p_n)}(\mathbf{0},...,\mathop{\mathbf{v}_i}\limits_{\text{the}~i-\text{th}},...,\mathbf{0})=\di f_{p_i}(\mathbf{v}_i).$$
So the image of $\di\varphi_{(p_1,p_2,...,p_n)}$ contains a basis of $\mathbb{R}^n$, i.e., $\di\varphi_{(p_1,p_2,...,p_n)}$ is surjective.
Then, there exists a neighbourhood $W$ of $f(p_1)+f(p_2)+...f(p_n)=\varphi((p_1,p_2,...,p_n))$ such that $\varphi$ maps some neighbourhood of
$(p_1,p_2,...,p_n)$ onto $W$ (see \cite[Theorem 0.5.2]{Kli}).
Thus,
$W$ is contained in $\varphi(U\times U\times...\times U)=f(U)+f(U)+...+f(U)$ and hence $f(p_1)+f(p_2)+...f(p_n)$ is an interior point of $f(U)+f(U)+...+f(U)$.
\end{proof}

\begin{proposition}\label{solLie}
Let $(G, \tau)$ be a solvable connected Lie group. Then $\tau$ admits no successors.
\end{proposition}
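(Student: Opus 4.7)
The plan is to induct on $\dim G$. For $\dim G\leq 1$, $G$ is abelian and the claim follows from Fact~\ref{AbelLC}. For the inductive step, assume the result for all solvable connected Lie groups of strictly smaller dimension, suppose $\sigma$ is a successor of $\tau$, and aim for a contradiction. Using Proposition~\ref{abelnorm} together with finite-dimensionality of $\mathfrak{g}$, pick a closed connected non-trivial normal abelian subgroup $A\subseteq G$ of minimum dimension. Since $G/A$ is a solvable connected Lie group of strictly smaller dimension, the inductive hypothesis combined with Fact~\ref{sub}(1) forces $\sigma/A=\tau/A$.

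If $A$ is central in $G$, then Corollary~\ref{cent} (second assertion) forces $\sigma\res_A\succ\tau\res_A$, contradicting Fact~\ref{AbelLC} applied to the connected locally compact abelian group $(A,\tau\res_A)$. Otherwise $A$ is non-central; its maximal compact subgroup is characteristic in $A$, hence normal in $G$, and by Iwasawa's theorem (as invoked in the proof of Lemma~\ref{absub}) it is central in $G$. By minimality of $A$ this maximal compact subgroup must be trivial, so $A\cong\mathbb{R}^k$ for some $k\geq 1$. The same minimality implies that $A$ contains no proper non-trivial closed connected $G$-invariant subgroup, so the linear $G$-action on $A$ is irreducible. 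The remaining goal is to show $\sigma\res_A=\tau\res_A$; combined with $\sigma/A=\tau/A$, Merson's Lemma (Fact~\ref{Mer}) will then force $\sigma=\tau$, the desired contradiction.

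To prove $\sigma\res_A\subseteq\tau\res_A$, by Lemma~\ref{finer} applied inside $A$ it suffices to show that every $\sigma\res_A$-open neighbourhood $V$ of $0\in A$ contains a $\tau\res_A$-open set (writing $A\cong\mathbb{R}^k$ additively). Fix $a_0\in A\setminus\{0\}$ and consider the commutator map $\phi_{a_0}\colon G\to A$, $g\mapsto ga_0g^{-1}-a_0$, with the subtraction taken in $A$. Because $A$ is abelian, $\phi_{a_0}$ vanishes on $A$ and hence factors as $\bar\phi_{a_0}\circ\pi$ for some $\bar\phi_{a_0}\colon G/A\to A$; since $\phi_{a_0}$ is continuous from $(G,\sigma)$ into $(A,\sigma\res_A)$, the universal property of the quotient topology yields that $\bar\phi_{a_0}\colon(G/A,\sigma/A)\to(A,\sigma\res_A)$ is continuous, and then the identity $\sigma/A=\tau/A$ lets me pull back along $\pi\colon(G,\tau)\to(G/A,\tau/A)$ to conclude that $\phi_{a_0}\colon(G,\tau)\to(A,\sigma\res_A)$ is continuous. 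Thus $\phi_{a_0}^{-1}(V_0)$ is a $\tau$-open neighbourhood of $1$ for every $\sigma\res_A$-open $V_0\ni 0$. Choose such a $V_0$ small enough that $V_0+\cdots+V_0\subseteq V$ ($k$ summands), and let $U\subseteq\mathfrak{g}$ be a connected open neighbourhood of $0$ with $\exp(U)\subseteq\phi_{a_0}^{-1}(V_0)$; then $\tilde f:=\phi_{a_0}\circ\exp\colon U\to A$ is smooth with $\tilde f(U)\subseteq V_0$. The crucial geometric input is that, for $a_0\neq 0$, the linear span of $\phi_{a_0}(G)$ is a non-zero $G$-invariant subspace of $A$ (a short calculation shows that $h\cdot(ga_0g^{-1}-a_0)=(hga_0(hg)^{-1}-a_0)-(ha_0h^{-1}-a_0)$ lies in the span), hence equals $A$ by irreducibility; by real-analyticity of $\phi_{a_0}$ and connectedness of $G$, the same conclusion holds for $\tilde f(U)$, so $\tilde f(U)$ is not contained in any proper affine subspace of $A$. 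Lemma~\ref{geo} now gives that $\tilde f(U)+\cdots+\tilde f(U)$ ($k$ copies) contains a non-empty Euclidean-open subset of $A$, which lies in $V_0+\cdots+V_0\subseteq V$. Thus $V$ contains a $\tau\res_A$-open subset, as required.

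The main obstacle is orchestrating the hypotheses of Lemma~\ref{geo}: producing a full-dimensional Euclidean-open set inside $V$ demands that the image of the smooth commutator map affinely span $A$, and it is precisely the minimality of $A$, which forces irreducibility of the $G$-action on the vector group $A$, that delivers this. Real-analyticity of $\phi_{a_0}$ is then needed to transfer the affine-span property from the global image $\phi_{a_0}(G)$ down to its restriction on an arbitrarily small $\tau$-open ball, which is where the hypothesis of Lemma~\ref{geo} must actually be verified.
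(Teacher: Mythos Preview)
Your argument is correct and follows the same inductive skeleton as the paper, but the way you handle the inductive step is genuinely different in two places. First, the paper passes to the universal cover so that the minimal normal abelian subgroup $N$ is automatically simply connected (hence a vector group); you instead stay in $G$ and observe that the maximal torus of $A$ is characteristic, hence normal in $G$, hence central by Iwasawa, and therefore trivial by the dimension-minimality of $A$---so $A\cong\mathbb{R}^k$ without any covering argument. Second, and more substantially, the paper runs a dichotomy: either some orbit map $\varphi^x$ has image affinely spanning $N$ on every small neighbourhood (Case~1, where Lemma~\ref{geo} applies directly), or for every $x$ some small neighbourhood maps into a proper affine subspace (Case~2), from which an explicit invariant proper subspace $N_x$ is built, contradicting minimality. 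You collapse this dichotomy by first noting that minimality of $A$ forces the conjugation representation on $A\cong\mathbb{R}^k$ to be irreducible (so the centraliser of $A$ meets $A$ trivially and the global image $\phi_{a_0}(G)$ spans $A$ for every $a_0\neq 0$), and then invoking the real-analytic identity theorem to transport this spanning property down to $\phi_{a_0}(\exp U)$ for an arbitrarily small $U$. This is shorter and cleaner, but it buys the brevity by importing the real-analytic structure on Lie groups and the identity principle, whereas the paper's Case~2 argument is purely $C^\infty$/algebraic and stays within the smooth setting in which Lemma~\ref{geo} is stated.
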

\begin{proof}
We prove by induction on the dimension of $(G, \tau)$.
When $\dim G=1$, $G$ is abelian, and the result follows trivially from Fact \ref{AbelLC}.

Now assume the proposition holds for groups of dimension  $\leq n-1$ and let $\dim G=n$.
We first consider the case where $G$ is simply connected. Let $N$ be a connected, closed, normal abelian group of $G$ with the smallest positive dimension (the existence of such a subgroup is guaranteed by Proposition \ref{abelnorm}).
Then $N$ is simply connected (see \cite[Theorem 11.2.15]{HN}), hence topologically isomorphic to an Euclidean space. Moreover, $(G/N, \tau/N)$ is a solvable connected Lie group of dimension $\leq n-1$.
By the induction hypothesis, $\tau/N$ has no successors. Thus, by Fact \ref{cent}, if $\sigma$ is a successor of $\tau$, then $\sigma/N=\tau/N$ and $\sigma\res_N\neq \tau\res_N$.

Now consider the action $$\varphi^*: G\times N\to N, (g, x)\to gxg^{-1}.$$
Since $N$ is abelian, $N$ is in the kernel of $\varphi^*$.
Hence $\varphi^*$ induces a group action $$\varphi: G/N \times N\to N.$$
If $\varphi$ is a trivial action, then so is $\varphi^*$, and thus $N$ is in the centre of $G$.
As we have already shown that $\sigma\res_N\neq \tau\res_N$, it follows then from Fact \ref{sub} (2) that $\sigma\res_N$ succeeds $\tau\res_N$.
However, this cannot happen by Fact \ref{AbelLC}, because $(N, \tau\res_N)$ is a connected locally abelian group. Therefore, the action $\varphi$ cannot be trivial.

We will denote $\varphi(g, x)$ by $g\cdot x$ for $g\in G/N$ and $x\in N$.
Moreover, $\varphi$ is smooth (resp. continuous) if $G/N$ and $N$ are given topologies induced from $\tau$ (resp. $\sigma$).
We shall denote the action by $\varphi_\tau$ and $\varphi_\sigma$, respectively, when $G/N$ and $N$ are given the corresponding pair of topologies.

For any $x\in N$, we shall denote by $\varphi_\tau^x$ or $\varphi_\sigma^x$ the mapping
$$G/N\to N, g\mapsto g\cdot x,$$
depends on the choice of the topologies. Thus, $\varphi^x_\tau$ is smooth and $\varphi^x_\sigma$ is continuous.

We consider the question in the following two cases. For the sake of convenience, we shall write the operation of the group $N$ additively and denote the identity by $0$.
\\

\noindent{\bf Case 1.} There exists $x\in N$ such that for any compact identity neighbourhood $V$ in $(G/N, \sigma/N)$, $\varphi_\sigma^x(V)$ is not contained in a coset of any proper linear subspace of $N$.
\begin{proof}[Proof of Case 1]
For any neighbourhood $U$ of $0$ in $(N, \sigma\res_N)$, one can find another identity neighbourhood $U'$ in the same group such that
$$\underbrace{U'+ U'+...+ U'}_{n~\text{copies}}\subseteq U.$$
By continuity of $\varphi^x_\sigma$, one may take an identity neighbourhood $V$ of $(G/N, \sigma/N)$ such that $\varphi_\sigma^x(V)\subseteq x+U'$.
Moreover, since $\sigma/N= \tau/N$ is a Lie group topology on $G/N$, we may assume that $V$ is diffeomorphic to a pathwise connected open set in $\mathbb{R}^m$, where $m$ is the dimension of $G/N$ .
By the assumption of this case, $\varphi_\sigma^x (V)$ is not contained in any coset of a proper linear subspace of $N$.
Note that $\varphi_\sigma^x$ and $\varphi_\tau^x$ are the same mapping if we forget the topologies.
So $\varphi_\tau^x(V)-x=\varphi_\sigma^x(V)-x\subseteq U'$.
Moreover, $V$ is open in $(G/N, \tau/N)$ since $\tau/N=\sigma/N$.
It then follows from Lemma \ref{geo} and the smoothness of $\varphi_\tau^x$ that
$$U\supseteq \underbrace{U'+ U'+... +U'}_{n~\text{copies}}\supseteq  \underbrace{\varphi_\tau^x(V)+...+\varphi_\tau^x(V)}_{n~\text{copies}}-nx$$
contains a non-empty open set in the Euclidean topology, i.e. $\tau\res_N$, on $N$.
Then, according to Lemma \ref{finer},  $\tau\res_N$ is finer than $\sigma\res_N$.
Since $\sigma\res_N$ is also finer than $\tau\res_N$, the two topologies coincide.
This is a contradiction.
\end{proof}

\noindent{\bf Case 2.} For any $x\in N$, there exists a compact identity neighbourhood $V$ in $(G/N, \sigma/N)$ such that $\varphi_\sigma^x(V)$ is contained in a coset space of a proper linear subspace of $N$.

\begin{proof}[Proof of Case 2]
Let $r=\dim N$.
For each $x\in N$, we take an identity neighbourhood $V_x$ in $(G/N, \sigma/N)$ such that, $N_x$, the linear subspace of $N$ generated by $\varphi_\sigma^x(V_x)-x$ is of the least dimension.
By the assumption, $\dim N_x\leq r-1$.
On the other hand, since the action $\varphi$ is non-trivial, there exists $x\in N$ such that $G/N$ does not fix $x$.
While, since $V_x$ generates $G/N$, this is saying that $x$ is not fixed by some element in $V_x$.
So, $k:=\dim N_x\geq 1$.

Now, take an identity neighbourhood $W$ in $(G/N, \sigma/N)$ such that $WW\subseteq V_x$.
By the choice of $V_x$, the set $\varphi_\sigma^x(W)-x$ also spans the vector space $N_x$.
Let $g_1, g_2,...,g_k\in W$ such that $\{\varphi_\sigma^x(g_1)-x, \varphi_\sigma^x(g_2)-x,...,\varphi_\sigma^x(g_k)-x\}$ is a basis of $N_x$.
Then, for any $h\in W$ and $i\in \{1,2,...,k\}$, one has
$$\varphi(h, \varphi_\sigma^x(g_i)-x)=\varphi(h, g_i\cdot x-x)=h\cdot(g_i\cdot x-x)=(hg_i)\cdot x-h\cdot x.$$
Since  $hg_i\in WW\subseteq V_x$, we obtain that $(hg_i)\cdot x\in \varphi_\sigma^x(V_x)\subseteq N_x+x$.
And it is evident that $h\cdot x$ is contained in $N_x+x$ as well.
So we have that $$\varphi(h, \varphi_\sigma^x(g_i)-x)=(hg_i)\cdot x-h\cdot x\in (N_x+x)-(N_x+x)=N_x.$$
Recall that $\{\varphi_\sigma^x(g_1)-x, \varphi_\sigma^x(g_2)-x,...,\varphi_\sigma^x(g_k)-x\}$ is a basis of $N_x$, so $N_x$ is an invariant subspace of $N$ for the linear automorphism $\varphi(h,-)$ for any $h\in W$.
Since $W$ generates the group $G/N$, $N_x$ is invariant under any element in $G/N$.
By the definition of the action $\varphi$, we known that $N_x$ is also invariant under $\varphi^*$.
That is, for any $h\in G$, we have $hN_xh^{-1}=N_x$.
So, $N_x$ is a normal subgroup of $G$.
While this contradicts to the choice of $N$.
So this case cannot happen.
\end{proof}

In summary, we have proved that $\tau$ has no successor if $(G, \tau)$ is simply connected and solvable.
Now let us consider the general case.
Let $(G, \tau)$ be a connected solvable Lie group and $(\widetilde{G}, \widetilde{\tau})$ its universal covering group.
Then $(G, \tau)$ is a quotient group of $(\widetilde{G}, \widetilde{\tau})$.
As $\widetilde{\tau}$ has no successor, so neither does $\tau$, by Corollary \ref{quot}.
\end{proof}

\begin{theorem}\label{solLC}
Let $(G, \tau)$ be a solvable connected locally compact group. Then $\tau$ admits no successors.
\end{theorem}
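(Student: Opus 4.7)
The plan is to reduce the statement to the Lie case, already handled by Proposition \ref{solLie}, via Yamabe's approximation theorem: every connected locally compact group admits arbitrarily small compact normal subgroups $K$ with $G/K$ a Lie group. I would argue by contradiction, assuming $\sigma$ is a successor of $\tau$, and pick such a compact normal subgroup $K$ of $(G,\tau)$ together with its identity component $K_0$. Since $K_0$ is characteristic in $K$, it is normal in $G$; since $K$ is a closed subgroup of the solvable group $G$, it is solvable, and so $K_0$ is a connected compact solvable group. The structure theorem for connected compact groups forces such a group to be abelian, because its commutator subgroup is semisimple connected compact, which solvability collapses to the identity. Iwasawa's theorem on compact normal abelian subgroups of connected groups (the one invoked in Lemma \ref{absub}) then guarantees that $K_0$ is central in $G$.

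With $K_0$ central in $G$, Lemma \ref{absub} yields $\sigma\res_{K_0} = \tau\res_{K_0}$, and the necessity clause of Corollary \ref{cent} gives $\tau/K_0 \prec \sigma/K_0$. I now pass to the connected locally compact solvable group $(G/K_0,\tau/K_0)$, where $K/K_0$ is a compact, normal, totally disconnected subgroup. By Lemma \ref{totcent} it is central, hence abelian, so I can apply Lemma \ref{absub} a second time, inside $G/K_0$, to conclude
$$(\sigma/K_0)\res_{K/K_0} = (\tau/K_0)\res_{K/K_0}.$$

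Finally, I examine the further quotient $(G/K_0)/(K/K_0) = G/K$, which is a connected solvable Lie group. By Proposition \ref{solLie}, $\tau/K$ has no successor, while Fact \ref{sub}(1) applied to $\tau/K_0 \prec \sigma/K_0$ gives $\tau/K \preceq \sigma/K$; together these force $\tau/K = \sigma/K$. Merson's Lemma (Fact \ref{Mer}), applied inside $G/K_0$ with the normal subgroup $K/K_0$, then forces $\tau/K_0 = \sigma/K_0$, contradicting what was just established. The only external ingredient beyond the excerpt is Yamabe's theorem; everything else is formal bookkeeping with Lemma \ref{absub}, Lemma \ref{totcent}, Corollary \ref{cent}, Fact \ref{sub}, Fact \ref{Mer}, and Proposition \ref{solLie}. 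The main subtlety I anticipate is that $K$ itself need not be abelian nor central, so Lemma \ref{absub} cannot be applied to $K$ directly; this is precisely why the two-step reduction through $K_0$ and then $K/K_0$ is essential.
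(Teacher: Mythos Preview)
Your proof is correct. The paper takes a shorter, one-step route: rather than splitting the Gleason--Yamabe compact normal subgroup (your $K$, their $N$) into $K_0$ and $K/K_0$, it invokes Iwasawa's theorem \cite[Theorem~13]{Iwa} that every compact subgroup of a connected locally compact group lies in a \emph{connected} maximal compact subgroup. Since a connected compact solvable group is abelian \cite[Lemma~2.2]{Iwa}, this forces $K$ itself to be abelian, and a single application of Corollary~\ref{connsub} then yields $\tau/K \prec \sigma/K$, contradicting Proposition~\ref{solLie} directly. Your anticipated subtlety---that $K$ might fail to be abelian or central---thus does not actually arise in this setting. What your two-step reduction buys is independence from Iwasawa's embedding theorem at the top level: you manage with only Lemma~\ref{totcent} and the machinery already assembled in the paper (Lemma~\ref{absub}, Corollary~\ref{cent}, Fact~\ref{sub}, Merson's Lemma). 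The paper's argument is more economical but imports one additional external structural fact.
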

\begin{proof}
By the celebrated Gleason-Yamabe theorem, the connected locally compact group $G$ admits a compact normal subgroup $N$ such that $G/N$ is a Lie group.
Then, according to a theorem of Iwasawa \cite[Theorem 13]{Iwa}, $N$ is contained in a maximal compact subgroup $K$ of $G$, and $K$ is connected.
Since connected solvable compact groups are abelian \cite[Lemma 2.2]{Iwa}, $K$ and hence $N$ are abelian.

Now suppose that $\sigma$ is a successor of $\tau$.
On one hand, by Corollary \ref{connsub}, we have $\tau/N\prec \sigma/N$; on the other hand, our above proposition yields that $\tau/N$ admits no successors.
This contradiction completes the proof.
\end{proof}

\section*{Acknowledgements} We acknowledge Professor Wei He, Professor Jun Wang, and Doctor Xuchao Yao for their valuable suggestions. We also acknowledges the support from NSFC (grants No. 12301089, 12101445 and 12271258), as well as from the Natural Science Foundation of Jiangsu Higher Education Institutions of China (grant No. 23KJB110017) and  the support of training targets for outstanding young backbone teachers of the
``Qinglan Project'' in Jiangsu universities in 2023.

\end{document}